\def\@settitle{\begin{center}\baselineskip14\p@\relax\sc\Large\@title\end{center}}
\def\@setauthors{\begingroup\def\thanks{\protect\thanks@warning}\trivlist\centering\footnotesize \@topsep30\p@\relax\advance\@topsep by -\baselineskip\item\relax\author@andify\authors\def\\{\protect\linebreak}\sc\large{\authors}\ifx\@empty\contribs\else,\penalty-3 \space \@setcontribs\@closetoccontribs\fi\endtrivlist\endgroup}
\renewcommand*{\@seccntformat}[1]{\csname the#1\endcsname.\hspace{1mm}} 
\title[Reduction of Segre structures]{Reduction of~$\beta$-integrable\\ 2-Segre structures}
\author[T. Mettler]{Thomas Mettler}
\date{November 13, 2012}
\subjclass[2010]{53C15, 53C28, 53C29}
\address{Forschungsinstitut f\"ur Mathematik, ETH Z\"urich, R\"amistrasse 101, CH-8092 Z\"urich, Switzerland}
\email{thomas.mettler@fim.math.ethz.ch}
\thanks{Thanks to Schweizerischer Nationalfonds for its support via the postdoctoral fellowship PBFRP2-133545, to the Mathematical Sciences Research Institute in Berkeley for its support via a postdoctoral fellowship in the period September 2011-June 2012, and to the Forschungsinstitut f\"ur Mathematik at ETH Z\"urich for its support in the period July-December 2012.}
\newcommand{\im}{\operatorname{im}}
\newcommand{\eds}{\textsc{eds}}
\renewcommand{\Re}{\operatorname{Re}}
\renewcommand{\Im}{\operatorname{Im}}
\newcommand{\GL}{\mathrm{GL}}
\renewcommand{\P}{\mathbb{P}}
\newcommand{\V}{V_{\C}}
\newcommand{\f}{\mathbb{CP}^1\setminus \mathbb{RP}^1}
\newcommand{\id}{\mathrm{I}}
\renewcommand{\d}{\mathrm{d}}
\renewcommand{\i}{\mathrm{i}}
\newcommand{\B}{F_\mathcal{S}}
\newcommand{\R}{\mathbb{R}}
\newcommand{\C}{\mathbb{C}}
\newcommand{\AJ}{\mathfrak{J}}
\newcommand{\X}{X_\mathcal{S}}
\renewcommand{\H}{S^1\cdot \,\GL(n,\R)}
\newcommand{\GR}{G^+_2(\R^{n+2})}
\newcommand{\cip}{\mathbb{CP}^{n+1}\setminus\mathbb{RP}^{n+1}}
\newcommand{\E}{\Pi}
\newcommand{\G}{H^+(2,n)}
\newtheorem{theorem}{Theorem}[section]
\newtheorem{lemma}{Lemma}[section]
\newtheorem{corollary}{Corollary}[section]
\newtheorem{proposition}{Proposition}[section]
\theoremstyle{definition}
\newtheorem{defi}{Definition}[section]
\newtheorem{remark}{Remark}[section]
\theoremstyle{remark}
\newtheorem{ex}{Example}[section]
\numberwithin{equation}{section}
\begin{document}

\maketitle

\begin{abstract}
We show that locally every $\beta$-integrable $(2,n)$-Segre structure can be reduced to a torsion-free $\H$-structure. This is done by observing that such reductions correspond to sections with holomorphic image of a certain `twistor bundle'. For the homogeneous $(2,n)$-Segre structure on the oriented~$2$-plane Grassmannian, the reductions are shown to be in one-to-one correspondence with the smooth quadrics~$Q \subset \mathbb{CP}^{n+1}$ without real points.  
\end{abstract}


\section{Introduction}

We study the problem of reducing the $G$-structure associated to a certain type of Segre structure to a torsion-free substructure. 

Segre - or closely related structures and their counterparts in the category of complex manifolds were studied under various names, including tensor product structure~\cite{MR0200858}, generalised conformal structure~\cite{MR925263}, complex paraconformal structure~\cite{MR1085595}, (almost) Grassmann structure~\cite{MR1406793,MR2389257,MR0291976,MR1804138}, Segre structure~\cite{MR1427757,MR1847382}, and in~\cite{MR1106939} as an example of a class of structures called almost symmetric hermitian manifolds.  

Here, by an $(m,n)$-Segre structure on a manifold $M$ we mean a smoothly varying family of cones $\mathcal{S}_p\subset T_pM$ in the tangent spaces of $M$, each linearly isomorphic to the Segre cone of linear maps $\R^m\to \R^n$ of rank one. The tangent planes to $M$ which are contained in some Segre cone $\mathcal{S}_p$ come in two types, called $\alpha$- and $\beta$-planes. An immersed submanifold $\Sigma \subset M$ whose tangent planes are all $\beta$-planes and which is maximal in the sense of inclusion is called a $\beta$-surface. A Segre structure is called $\beta$-integrable, if every $\beta$-plane is tangent to a unique $\beta$-surface.

In~\cite{MR1847382} Grossman showed that the space of paths of a certain class of geodesically simple path geometries, which he calls torsion-free, inherits a Segre structure. Bryant observed in~\cite{MR1898190} that the space of oriented geodesics~$\Lambda$  of a geodesically simple Finsler structure of constant flag curvature (\textsc{cfc}) $1$ inherits a K\"ahler structure and a torsion-free~$\H$-struc\-ture satisfying a certain positivity condition. Conversely, he shows that every torsion-free $\H$-structure satisfying the positivity condition (and an integrability condition for $n=2$) arises via a (generalised) \textsc{cfc} $1$ Finsler structure.

The main result of the article is that locally every $\beta$-integrable $(2,n)$-Segre structure $\mathcal{S}$ can be reduced to a torsion-free~$\H$-struc\-ture. It follows with Bryant's result, that locally every $\beta$-integrable $(2,n)$-Segre structure admitting a $\H$-reduction satisfying the positivity condition of~\cite{MR1898190} arises via a (generalised) \textsc{cfc} $1$ Finsler structure. 

Note that an $\H$-reduction of a $\beta$-integrable $(2,n)$-Segre structure $\mathcal{S}$ equips the underlying manifold with an integrable almost complex structure which preserves $\mathcal{S}$ and for which the $\beta$-surfaces are totally real.  

This article is organised as follows. In \S\ref{def} we review the construction of a `twistor bundle' $\rho : \X\to M$ over a manifold $M$ which is equipped with a $\beta$-integrable $(2,n)$-Segre structure and show in \S\ref{holo} that~$\rho$-sections having holomorphic image are in one-to-one correspondence with reductions of~$\mathcal{S}$ to torsion-free~$\H$-structures on~$M$. It follows that locally every~$\beta$-integrable~$(2,n)$-Segre structure can be reduced to a torsion-free~$\H$ structure. In \S\ref{quad} we show that for the homogeneous $(2,n)$-Segre structure on the oriented~$2$-plane Grassmannian~$M=G^+_2(\R^{n+2})$, the reductions are in one-to-one correspondence with the smooth quadrics~$Q \subset \mathbb{CP}^{n+1}$ without real points.   
 
\begin{remark}
Before this work begun Robert Bryant informed the author about his private notes regarding the generality of positive constant flag curvature (\textsc{cfc}) Finsler structures on the~$n$-sphere. He shows that a positive \textsc{cfc} Finsler structure on the~$n$-sphere all of whose geodesics are closed and of the same length gives rise to a~$D^2$-bundle~$\rho : X \to \Lambda$, fibering over the space of oriented geodesics $\Lambda$, whose total space is a complex manifold. This bundle is isomorphic to~$\rho_0 : \cip \to G^+_2(\R^{n+2})$ in the case of a rectilinear Finsler structure. In addition, the Finsler structure induces a~$\rho$-section having holomorphic image (isomorphic to a quadric in the rectilinear case) and conversely every such section satisfying a certain convexity condition gives rise to a Finsler structure on~$S^n$ sharing the same geodesics. Using Kodaira deformation theory this allows Bryant to determine the generality of such Finsler structures sharing the same unparametrised geodesics. Although being related, the results in this article were arrived at independently. 
\end{remark}

\section{2-Segre structures}\label{def}

\subsection{Definitions and examples}

Let $m,n \geq 2$ be integers. A vector~$v \in \R^m\otimes \R^n$ is called \textit{decomposable} or \textit{simple} if there exists~$x \in \R^m$ and~$y \in \R^n$ such that~$v=x\otimes y$. Let~$X=(X_\alpha)$ be linear coordinates on~$\R^m$ and~$Y=(Y^k)$ on~$\R^n$. Writing~$Z^k_{\alpha}=X_{\alpha}\otimes Y^k$, the set of simple vectors in~$\R^m\otimes \R^n$ is the zero locus of the homogeneous quadratic equations
\begin{equation}\label{defsegre}
Z^i_{\alpha}Z^k_{\beta}-Z^i_{\beta}Z^k_{\alpha}=0
\end{equation}
and thus is a cone. A subset~$\mathcal{S}$ in a real vector space~$V$ is called an~$(m,n)$-\textit{Segre cone}, if there exists an isomorphism~$V \simeq \R^m\otimes \R^n$ which yields a bijection between~$\mathcal{S}$ and the cone of simple vectors in~$\R^m\otimes \R^n$. 

Let $\mathcal{S}\subset V$ be a Segre cone. Clearly, the isomorphism $V \simeq \R^m\otimes\R^n$ is unique up to composition with an element of the group $G(m,n)$, the subgroup of $\mathrm{GL}(\R^m\otimes\R^n)$ consisting of maps preserving the cone of simple vectors. Let $H(m,n)=\mathrm{GL}(m,\R)\otimes \mathrm{GL}(n,\R)$ and $\mathbb{Z}_2\subset G(n,n)$ be the subgroup generated by the involution $x\otimes y \mapsto y \otimes x$ for $x,y \in \R^n$. Then we have an isomorphism of Lie groups\footnote{For a proof, see the appendix.}
\begin{equation}\label{strucgroup}
G(m,n)\simeq\left\{\begin{array}{cc} H(m,n) & n \neq m,\\ H(n,n)\rtimes \mathbb{Z}_2  & n=m.\\ \end{array}\right.
\end{equation}
\begin{defi}
An~$(m,n)$-\textit{Segre structure}~$\mathcal{S}$ on a smooth~$mn$-manifold $M$ is a choice of an~$(m,n)$-Segre cone~$\mathcal{S}_p\subset T_pM$ in each tangent space of~$M$ which varies smoothly from point to point. 
\end{defi}
An isomorphism $f : T_pM \to \R^m\otimes\R^n$ will be called a \textit{Segre coframe} at $p$ if it maps $\mathcal{S}_p$ to the cone of simple vectors in $\R^m\otimes\R^n$. 

The set of Segre coframes at $p$ will be denoted by $(\B)_p$ and is the fibre of a principal right $G(m,n)$-bundle $\pi : \B \to M$, with right action given by $R_g(f)=g^{-1}\circ f$ for $g \in G(m,n)$. The tautological $1$-form $\zeta$ is defined by requiring that $\zeta_f=f\circ \pi^{\prime}_f : T_f\B \to \R^m\otimes\R^n$ for $f \in \B$. It satisfies $R_g^*\zeta=g^{-1}\circ \zeta$ for $g \in G(m,n)$.

A linear subspace~$\E\subset\mathcal{S}_p$ is called \textit{simple}. The simple linear subspaces which are of the form $\Pi \simeq \R^m\otimes y$ for some $y \in \R^n$ are called $\alpha$-planes and the simple linear subspaces which are of the form $\Pi \simeq x \otimes \R^n$ for some $x \in \R^m$ are called $\beta$-planes.\footnote{The reader is warned that often the opposite convention regarding $\alpha$- and $\beta$-planes is used. The convention used here is chosen to be consistent with~\cite{MR2286630}.} 
Note that $\alpha$- and $\beta$-planes are not well defined for $n=m$ unless the Segre structure has been reduced to an $H(n,n)$-structure. An immersed connected manifold $\Sigma \to M$ for which~$T_p\Sigma$ is a~$\beta$-plane for every point~$p \in \Sigma$ is called a \textit{proto}~$\beta$\textit{-surface}. 
If, in addition,~$\Sigma \to M$ is maximal in the sense of inclusion, then~$\Sigma$ is called a~$\beta$\textit{-surface}. A Segre structure~$\mathcal{S}$ is called~$\beta$\textit{-integrable} if every~$\beta$-plane~$\E$ is tangent to a unique~$\beta$-surface~$\Sigma \to M$. The notion of a (proto)~$\alpha$-surface and~$\alpha$-integrability are defined analogously. The necessary and sufficient conditions for a Segre structure of type~$(m,n)$ to be~$\alpha$- or~$\beta$-integrable were given in~\cite{MR1692442,MR1804138} (see also~\cite{MR1085595} for the complex case).
\begin{ex} Recall that a pseudo-Riemannian metric $g$ on a smooth $4$-manifold $M$ with signature $(+,+,-,-)$ is said to have split-signature. Locally $g$ may be written as 
\begin{equation}\label{nullcone}
g=\varepsilon^1_1\odot \varepsilon^2_2-\varepsilon^1_2 \odot \varepsilon^2_1
\end{equation}
for some linearly independent $1$-forms $\varepsilon^i_j$. A vector~$v \in TM$ is called \textit{null} if~$g(v,v)=0$. It follows with \eqref{nullcone} that the $g$-null vectors give rise to a~$(2,\! 2)$-Segre structure on $M$ and conversely it can be shown that every $(2,\! 2)$-Segre structure on a $4$-manifold $M$ gives rise to a unique conformal structure of split-signature on~$M$.
\end{ex}
Closely related to Segre structures is the notion of an almost Grassmann structure. 
\begin{defi}
A smooth $mn$-manifold $M$ is said to carry an \textit{almost Grassmann structure} if there exist smooth vector bundles $E_m\to M$ and $E_n \to M$ of rank $m,n$ respectively together with an isomorphism $TM \simeq E_m\otimes E_n$.
\end{defi}
\begin{remark}
Clearly, an almost Grassmann structure on $M$ induces a unique Segre structure on $M$, but the existence of a Segre structure $\mathcal{S}$ on $M$ is in general not sufficient for the existence of an almost Grassmann structure inducing $\mathcal{S}$. The two definitions are however equivalent when $m+n$ is odd (see the appendix).
\end{remark}
\begin{ex}The prototypical example of a manifold carrying an almost Grassmann structure is the Grassmannian of $m$-planes in $\R^{m+n}$ (see for instance~\cite{MR1804138} for details). We will construct the associated Segre structure in \S4 for the case $m=2$.
\end{ex}
\subsection{The structure equations of a 2-Segre structure}

We will henceforth restrict our attention to $H(2,n)$-structures $\pi : \B \to M$ and simply speak of $2$-Segre structures, thus implicitly assuming that $(2,2)$-Segre structures have been reduced to $H(2,2)$-structures. We think of $H(2,n)$ as a subgroup of $\mathrm{GL}(2n,\R)$ via the Kronecker product and consequently of $\pi : \B \to M$ as a reduction with structure group $H(2,n)$ of the full coframe bundle $F\to M$ whose fibre at $p \in M$ consists of all isomorphisms $T_pM \to \R^{2n}$. 

A linear connection $\theta$ on $F \to M$ is said to be \textit{adapted} to the $2$-Segre structure $\pi : \B \to M$ if $\theta$ pulls-back to $\B$ to become a principal $H(2,n)$-connection. A $2$-Segre structure is called \textit{torsion-free}, if it admits an adapted connection $\theta$ with vanishing torsion $\tau=\d \zeta + \theta \wedge \zeta$.

Write $H=H(2,n)$ and $\mathfrak{h}\subset \mathfrak{gl}(2,\R)\otimes \mathfrak{gl}(n,\R)$ for the Lie algebra of $H$. For computational purposes it is convenient to introduce the matrices 
$$
a=\left(\begin{array}{rr} 0&-1\\1 & 0\end{array}\right), \quad b_1=\left(\begin{array}{rr} 0&1\\0 & 0\end{array}\right), \quad b_2=\left(\begin{array}{rr} 0&0\\0 & 1\end{array}\right),
$$
and write~$e^i_k$ for the~$(n\times n)$-matrix whose entry is~$1$ at the position~$(k,i)$ and~$0$ otherwise. Using this notation an~$\mathfrak{h}$-basis is given by 
$$
a\otimes \id_n, \quad b_1\otimes \id_n, \quad b_2 \otimes \id_n, \quad \id_2 \otimes e^i_k, 
$$
and a principal $H$-connection $\theta$ on $\B$ may be written as 
\begin{equation}\label{defcon2}
\theta=\chi\otimes\id_n+\id_2\otimes \phi
\end{equation}
with~$\chi=\omega a + 2\xi_1b_1+2\xi_2b_2$ and~$\phi=\phi^i_ke^k_i$ for some linearly independent~$1$-forms~$\omega,\xi_1,\xi_2,\phi^i_k$ on~$\B$. Let $\xi=\xi_1+\i\xi_2$. Straightforward computations show that we may linearly identify $\R^{2n}$ with $\C^n$ in such a way that we can write the first structure equation in complex form: 
\begin{proposition}\label{strucprop}
The connection form~$(\omega,\xi,\phi)$ of an $\B$-adapted con\-nection $\theta$ satisfies 
\begin{equation}\label{struc}
\d\zeta=-\big(\i\left(\omega-\xi\right)\id_n+\phi\big)\wedge\zeta-\i\,\xi\,\id_n \wedge \bar{\zeta}+\tau. 
\end{equation}
\end{proposition}
\begin{proof}
Omitted. 
\end{proof}
Here the forms $\zeta$ and $\tau$ are thought to be $\C^n$-valued and~$\bar{\zeta}$ denotes the~$\C^n$-valued~$1$-form on~$\B$ which is obtained by complex conjugation of the entries of~$\zeta$. 

We have the curvature forms
\begin{equation}\label{curvatureforms}
\aligned
\Omega&=\d\omega+\omega\wedge\i\,(\xi-\bar{\xi}),\\
\Xi&=\d\xi+\xi\wedge\i\left(\bar{\xi}-2\omega\right),\\
\Phi&=\d\phi+\phi\wedge\phi-\omega\wedge\left(\xi+\bar{\xi}\right)\id_n.
\endaligned
\end{equation}
Differentiating the structure equation \eqref{struc} gives the \textit{Bianchi-identity}
\begin{equation}\label{bianchi}
\d\tau=\big(\i\left(\Omega-\Xi\right)\id_n+\Phi\big)\wedge\zeta+\i\,\Xi\,\id_n\wedge\bar{\zeta}. 
\end{equation}
\subsection{A quasiholomorphic fibre bundle} Let $P=\H\subset H(2,n)$ be the closed subgroup consisting of elements of the form
$$
e^{\i\varphi}\cdot b=\left(\begin{array}{rr}\cos \varphi & -\sin \varphi \\ \sin \varphi & \cos \varphi \end{array}\right)\otimes b
$$
for some real number $\varphi$ and $b\in \mathrm{GL}(n,\R)$. Equip the quotient~$\X=\B/P$ with its canonical smooth structure so that the quotient projection~$\nu : \B \to \X$ is a smooth surjective submersion. Note that the~$1$-forms~$\eta^k$ are~$\nu$-semibasic since they are~$\pi$-semibasic. Moreover since~$\theta=(\omega,\xi,\phi)$ is a principal~$H$-connection, it follows with \eqref{defcon2} that~$\xi$ is~$\nu$-semibasic as well. Therefore the forms~$\eta^k$ together with~$\xi_1$ and~$\xi_2$ span the~$\nu$-semibasic~$1$-forms on~$\B$.

\begin{lemma}\label{charcomplex}
Let~$\pi : \B \to M^{2n}$ be a~$2$-Segre structure and~$\theta=(\omega,\xi,\phi)$ an adapted connection.  Then there exists a unique almost complex structure~$\mathfrak{J}$ on~$\X$ such that a complex-valued~$1$-form on~$\X$ is a~$(1,\! 0)$-form for~$\mathfrak{J}$ if and only if its~$\nu$-pullback is a linear combination of~$\left\{\zeta^1,\ldots,\zeta^n,\xi\right\}$ with coefficients in~$C^{\infty}(\B,\C)$. 
\end{lemma}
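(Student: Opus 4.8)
The plan is to produce $\AJ$ by constructing, on the base $\X$, its bundle of $(1,\!0)$-forms and checking it has the right properties. Since $\nu:\B\to\X$ is a principal $\H$-bundle, a complex-valued $1$-form $\beta$ on $\X$ is the same datum as an $\H$-invariant, $\nu$-semibasic complex $1$-form $\nu^*\beta$ on $\B$; and the space of $\nu$-semibasic complex $1$-forms at a point $u\in\B$ is spanned, as noted just before the statement, by $\zeta^1,\dots,\zeta^n,\overline{\zeta^1},\dots,\overline{\zeta^n},\xi,\overline{\xi}$, which are pointwise linearly independent over $\C$ because $\eta^1,\dots,\eta^{2n},\xi_1,\xi_2$ are part of a coframing of $\B$ (here one uses that $\theta$ is a connection). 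Write $\Lambda_u=\operatorname{span}_\C\{\zeta^1_u,\dots,\zeta^n_u,\xi_u\}$; then $\Lambda_u$ has complex dimension $n+1=\tfrac12\dim_\R\X$, and $\Lambda_u\oplus\overline{\Lambda_u}$ is the whole complexified $\nu$-semibasic space at $u$.

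The key step is to show that $u\mapsto\Lambda_u$ is invariant under the right $\H$-action, so that it is the $\nu$-pullback of a genuine complex rank-$(n+1)$ subbundle $\Lambda^{1,0}\subset\C\otimes T^*\X$. Fix $h=e^{\i\varphi}\cdot b\in\H$ with $b\in\GL(n,\R)$. Since $\eta$ is the tautological $\R^{2n}$-valued $1$-form on the coframe bundle and $h$, regarded in $\GL(2n,\R)$, is complex linear for $J^{2n}_0$, one has $R_h^*\zeta^k=(e^{-\i\varphi}b^{-1})^k_l\,\zeta^l$, so $\operatorname{span}_\C\{\zeta^k\}$ is already $\H$-invariant. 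For $\xi$ one uses that $\theta$ is a principal $G(2,n)$-connection, so $R_h^*\theta=\mathrm{Ad}(h^{-1})\theta$; in terms of \eqref{defcon2} this amounts to $\chi\mapsto A^{-1}\chi A$ with $A=\exp(\varphi a)$ (recalling $J^{2n}_0=a\otimes\id_n$, so the central circle of $\H$ acts by $\exp(\varphi a)$ on the $\R^2$-factor) together with $\phi\mapsto b^{-1}\phi b$. A short computation, conjugating $\omega\,a+2\xi_1 b_1+2\xi_2 b_2$ by $\exp(\varphi a)$ and reading off the $b_1$- and $b_2$-components modulo $\R\,\id_2$, gives $R_h^*\xi_1=\cos2\varphi\,\xi_1+\sin2\varphi\,\xi_2$ and $R_h^*\xi_2=-\sin2\varphi\,\xi_1+\cos2\varphi\,\xi_2$, that is
$$R_h^*\xi=e^{-2\i\varphi}\,\xi.$$
Hence $R_h^*\Lambda=\Lambda$ for every $h\in\H$, so $\Lambda$ descends to $\Lambda^{1,0}\subset\C\otimes T^*\X$, and $\Lambda^{1,0}\oplus\overline{\Lambda^{1,0}}=\C\otimes T^*\X$ (apply $\nu^*$, which is injective with image the complexified $\nu$-semibasic forms).

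It then remains to invoke the standard correspondence: a complex subbundle $L\subset\C\otimes T^*\X$ with $L\oplus\overline L=\C\otimes T^*\X$ is equivalent to an almost complex structure. Indeed the $\C$-linear bundle endomorphism of $\C\otimes T^*\X$ equal to $+\i$ on $L$ and $-\i$ on $\overline L$ commutes with complex conjugation, because the splitting $L\oplus\overline L$ is conjugation-stable, hence it is the complexification of a unique real endomorphism field $\AJ^*$ of $T^*\X$ with $(\AJ^*)^2=-\id$, dual to a unique field of endomorphisms $\AJ$ of $T\X$ with $\AJ^2=-\id$, and $L$ is the bundle of $(1,\!0)$-forms of $\AJ$. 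Taking $L=\Lambda^{1,0}$ defines $\AJ$ on $\X$; by construction a $1$-form $\beta$ on $\X$ is of type $(1,\!0)$ for $\AJ$ exactly when $\nu^*\beta$ takes values in $\Lambda$ at each point, equivalently when $\nu^*\beta$ is a $C^\infty(\B,\C)$-linear combination of $\zeta^1,\dots,\zeta^n,\xi$ (smoothness of the coefficients being automatic, these forms being part of a coframing). Uniqueness is then immediate, since this property pins down the bundle of $(1,\!0)$-forms of $\AJ$ pointwise (locally there are enough $(1,\!0)$-forms to realise every value), and an almost complex structure is recovered from that bundle.

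I expect the main obstacle to be the verification of $\H$-invariance of $\Lambda$, and concretely the identity $R_h^*\xi=e^{-2\i\varphi}\xi$. One has to keep careful track of the several identifications in play: $\R^{2n}\simeq W\otimes W'\simeq\C^n$, which $2\times2$ matrix implements the central $S^1\subset\H$ on the $W=(\R^2)^*$-factor (namely $\exp(\varphi a)$), and the quotient by $\R\,\id_2$ that is built into the Lie algebra of $G(2,n)$. Once these are fixed the computation is elementary; conceptually it records the fact that $\mathrm{Ad}$ of the circle $\exp(\R a)$ acts on $\mathfrak{gl}(2,\R)/\R\,\id_2$ fixing the line $\R a$ and rotating the complementary $2$-plane — the plane carrying the $\xi_1,\xi_2$ directions — with weight $2$, so that the complex combination $\xi=\xi_1+\i\xi_2$ is rescaled by a unit complex number.
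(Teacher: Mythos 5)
Your proposal is correct and follows essentially the same route as the paper: the paper defines the endomorphism $\tilde{\AJ}$ on $T\B$ via the frame dual to $(\eta^l,\xi_1,\xi_2,\omega,\phi^i_k)$ and descends it to $\X$ using exactly the two equivariance identities $(R_{e^{\i\varphi}\cdot b})^*\xi=e^{-2\i\varphi}\xi$ and $(R_g)^*\eta=g^{-1}\eta$, which are the same facts you verify (your conjugation computation agrees with the paper's ``short computation''). Your presentation is merely the dual one — descending the span of $\{\zeta^1,\ldots,\zeta^n,\xi\}$ as the bundle of $(1,\!0)$-forms rather than the endomorphism itself — and is complete, including the correct handling of the $\R\,\id_2$-ambiguity in the decomposition of $\theta$.
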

\begin{proof} Denote by 
$
\frac{\partial}{\partial\eta^l},\; \frac{\partial}{\partial \xi_1}, \; \frac{\partial}{\partial \xi_2}, \; \frac{\partial}{\partial \omega}, \; \frac{\partial}{\partial \phi^i_k}, 
$
the vector fields dual to the coframing~$(\eta^l,\xi_1,\xi_2,\omega,\phi^i_k)$. Define the map~$\tilde{\mathfrak{J}} : T\B \to T\X$ by  
$$
\tilde{\mathfrak{J}}(v)=\nu^{\prime}\left(-\eta^{2k}(v)\frac{\partial}{\partial \eta^{2k-1}}+\eta^{2k-1}(v)\frac{\partial}{\partial \eta^{2k}}-\xi_2(v)\frac{\partial}{\partial \xi_1}+\xi_1(v)\frac{\partial}{\partial \xi_2}\right).
$$
The~$1$-forms~$\eta^l,\xi_1,\xi_2$ are~$\nu$-semibasic and thus we have~$\tilde{\mathfrak{J}}(v+w)=\tilde{\AJ}(v)$ for every~$v \in T\B$ and~$w \in T\B$ tangent to the~$\nu$-fibres. Since~$\theta$ is a principal~$H$-connection, the equivariance~$(R_h)^*\theta=h^{-1}\theta h$ for~$h \in H$ together with a short computation gives 
\begin{equation}\label{rightactioncon}
\left(R_{e^{\i \varphi} \cdot b}\right)^*\xi=e^{-2\i \varphi} \xi, 
\end{equation}
for~$e^{\i \varphi}\cdot b \in S^1\cdot \GL(n,\R)$. Moreover we have 
\begin{equation}\label{rightactioncan}
(R_h)^*\eta=h^{-1}\eta
\end{equation}
for every~$h \in H$. Identifying $\mathrm{GL}(n,\C)$ with the subgroup of $\mathrm{GL}(2n,\R)$ commuting with $a \otimes \mathrm{I}_n$ and using the fact that~$S^1\cdot \GL(n,\R) \subset \GL(n,\C)$ together with (\ref{rightactioncon}, \ref{rightactioncan}) implies
$
\tilde{\mathfrak{J}}\circ\left (R_{e^{\i \alpha} \cdot b}\right)^{\prime}=\tilde{\mathfrak{J}}. 
$
In other words there exists an almost  complex structure~$\mathfrak{J} : T\X \to T\X$ such that~$\tilde{\mathfrak{J}}= \mathfrak{J} \circ \nu^{\prime}$.  Clearly~$\mathfrak{J}$ has the desired properties and these properties uniquely characterise~$\mathfrak{J}$. 
\end{proof}
It is natural to ask when two~$\B$-adapted connections induce the same almost complex structure. We have: 
\begin{lemma}\label{samecomplex}
The~$\B$-adapted connections~$\theta=(\phi,\omega,\xi)$ and~$\theta^{\prime}=(\phi^{\prime},\omega^{\prime},\xi^{\prime})$ induce the same almost complex structure on~$\X$ if and only if~$\xi - \xi^{\prime}=\lambda_k\zeta^k$ for some smooth functions~$\lambda_k : \B \to \C$. In particular any two~$\B$-adapted connections with the same torsion induce the same almost complex structure. 
\end{lemma}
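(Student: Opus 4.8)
The plan is to establish the stated equivalence first and then obtain the final sentence as a consequence of it.

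\textbf{The equivalence.} By Lemma~\ref{charcomplex}, $\mathfrak{J}$ is the unique almost complex structure on $\X$ whose $(1,\!0)$-forms are those complex-valued $1$-forms on $\X$ whose $\nu$-pullback lies in the $C^{\infty}(\B,\C)$-span $\mathcal{M}=\langle\zeta^1,\ldots,\zeta^n,\xi\rangle$, and likewise $\mathfrak{J}'$ corresponds to $\mathcal{M}'=\langle\zeta^1,\ldots,\zeta^n,\xi'\rangle$. First I would note that $\mathcal{M}$ and $\mathcal{M}'$ are $H$-invariant rank-$(n+1)$ subbundles of the $\nu$-semibasic part of $T^*\B\otimes\C$ — invariance follows from \eqref{rightactioncon} and \eqref{rightactioncan}, since $H\subset\GL(n,\C)$ acts $\C$-linearly on the fibre — hence they are precisely the $\nu$-pullbacks of the $(1,\!0)$-bundles of $\mathfrak{J}$ and $\mathfrak{J}'$. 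As $\nu$ is a submersion, $\nu^*$ is fibrewise injective, so $\mathfrak{J}=\mathfrak{J}'$ holds if and only if $\mathcal{M}=\mathcal{M}'$. The implication $(\Leftarrow)$ is then immediate: $\xi-\xi'=\lambda_k\zeta^k$ puts $\xi'$ into $\mathcal{M}$, and equality of ranks forces $\mathcal{M}'=\mathcal{M}$. For $(\Rightarrow)$ I would write $\xi'=c\,\xi+\lambda_k\zeta^k$ with smooth $c,\lambda_k:\B\to\C$ and then invoke the standard fact that the difference $\theta-\theta'$ of two $\mathcal{S}$-adapted connections is $\pi$-semibasic, being the difference of two principal $G$-connections on $\pi:\B\to M$; by \eqref{defcon} this makes $\xi-\xi'$ a combination of the $\eta^k$, hence of the $\zeta^k$ and $\bar\zeta^k$. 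Therefore $(1-c)\,\xi=(\xi-\xi')+\lambda_k\zeta^k$ is $\pi$-semibasic, whereas $\xi=\xi_1+\i\xi_2$ belongs to a coframing of $\B$ and is nowhere $\pi$-semibasic; thus $c\equiv1$ and $\xi-\xi'=-\lambda_k\zeta^k$.

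\textbf{The final sentence.} Suppose $\theta$ and $\theta'$ have the same torsion. Since $\eta$ is common to both, $(\theta-\theta')\wedge\eta=T-T'=0$, and because $\theta-\theta'$ is $\pi$-semibasic this $2$-form is a combination of the products $\zeta^l\wedge\zeta^m$, $\zeta^l\wedge\bar\zeta^m$ and $\bar\zeta^l\wedge\bar\zeta^m$; in particular its $\bar\zeta\wedge\bar\zeta$-part vanishes. The computation I would then carry out, using \eqref{defcon2} and \eqref{defcon}, is to isolate that $\bar\zeta\wedge\bar\zeta$-part. The clean way is to decompose $\chi-\chi'\in\mathrm{span}\{a,b_1,b_2\}$, regarded as an endomorphism of the complex line $(\R^2,a)$, into its $\C$-linear and $\C$-antilinear parts; one finds the antilinear part to be $v\mapsto\i(\xi-\xi')\,\bar v$, and this term alone contributes a $\bar\zeta\wedge\bar\zeta$-component to $(\theta-\theta')\wedge\eta$, namely $\i\,\sigma\wedge\bar\zeta^k$ in the $k$-th complex entry, where $\sigma$ is the $\bar\zeta$-part of $\xi-\xi'$ (this is legitimate because $\xi-\xi'$, unlike $\xi$ itself, is $\pi$-semibasic). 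Vanishing of $\i\,\sigma\wedge\bar\zeta^k$ for every $k$, together with the pointwise linear independence of $\bar\zeta^1,\ldots,\bar\zeta^n$ and $n\ge2$, forces $\sigma=0$, that is $\xi-\xi'=\lambda_k\zeta^k$. The equivalence just proved then gives $\mathfrak{J}=\mathfrak{J}'$.

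\textbf{Where the work lies.} Everything except the extraction of the $\bar\zeta\wedge\bar\zeta$-part of $(\theta-\theta')\wedge\eta$ is formal, being forced by Lemma~\ref{charcomplex} and the $\pi$-semibasicity of $\theta-\theta'$. The one step that requires genuine (if modest) computation is recognising that the almost complex structure records only the $\C$-antilinear part of the $\R^2$-factor of an $\mathcal{S}$-adapted connection — precisely the form $\xi$ — and I expect the bookkeeping along the tensor splitting $\R^{2n}\simeq\R^2\otimes\R^n$ to be the main obstacle.
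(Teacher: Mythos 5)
Your proof is correct and follows essentially the same route as the paper: the equivalence rests on the characterisation of the $(1,\!0)$-forms from Lemma~\ref{charcomplex} together with the $\pi$-semibasicity of the difference of two adapted connections, exactly as in the paper's argument (your module/rank formulation versus the paper's manipulation of an arbitrary $(1,\!0)$-form is only a difference of bookkeeping). For the final sentence the paper merely records the explicit difference \eqref{adtors} as ``easy to check''; your extraction of the $\mathcal{A}^{0,2}_{\mathcal{S}}$-part of $(\theta-\theta')\wedge\eta$ via the structure equation \eqref{struc}, which forces the $\bar{\zeta}$-part of $\xi-\xi'$ to vanish when $n\geq 2$, is precisely that verification, carried out only for the component actually needed, so no gap remains.
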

\begin{proof}
Let~$\mathfrak{J}_{\theta}$,~$\mathfrak{J}_{\theta^{\prime}}$ denote the almost complex structures with respect to the connections~$\theta$,~$\theta^{\prime}$ and suppose~$\xi^{\prime}=\xi+\lambda_k\zeta^k$ for some smooth functions~$\lambda_k : \B \to \C$. Let~$\alpha$ be a~$(1,\!0)$-form for~$\mathfrak{J}_{\theta}$. Then we may write
$$
\nu^*\alpha=a_k \zeta^k+a\xi=a_k\zeta^k+a\left(\xi^{\prime}-\lambda_k\zeta^k\right)=\left(a_k-\lambda_k\right)\zeta^k+a\xi^{\prime}
$$
for some smooth functions~$a,a_k : \B \to \C$, thus showing that~$\alpha$ is a~$(1,\! 0)$-form for~$\mathfrak{J}_{\theta^{\prime}}$. Conversely suppose~$\mathfrak{J}_{\theta}=\mathfrak{J}_{\theta^{\prime}}$. Note that~$\xi-\xi^{\prime}$ is~$\pi$-semibasic and may thus be written as 
$$
\xi-\xi^{\prime}=\lambda_k\zeta^k+\lambda_k^{\prime}\bar{\zeta}^k
$$
for some smooth functions~$\lambda_k,\lambda_k^{\prime}: \B \to \C$. Let~$\alpha$ be a~$(1,\!0)$-form for~$\mathfrak{J}_{\theta}$. Then we may write
$$
\nu^*\alpha=a_k\zeta^k+a\xi=a_k^{\prime}\zeta^k+a^{\prime}\xi^{\prime}=a_k^{\prime}\zeta^k+a^{\prime}\left(\xi-\lambda_k\zeta^k-\lambda_k^{\prime}\bar{\zeta}^k\right)
$$
for some smooth functions~$a,a^{\prime},a_k,a_k^{\prime} : \B \to \C$. Thus it follows
$$
(a-a^{\prime})\xi+\left(a_k-a_k^{\prime}+a^{\prime}\lambda_k\right)\zeta^k+a^{\prime}\lambda_k^{\prime}\bar{\zeta}^k=0
$$
which can hold for an arbitrary~$(1,\!0)$-form~$\alpha$ if and only if~$\lambda_k^{\prime}=0$. Finally it is easy to check that if~$(\omega,\xi,\phi)$ and~$(\omega^{\prime},\xi^{\prime},\phi^{\prime})$ are two~$\B$-adapted connections with the same torsion, then there exist~$n$ smooth complex-valued functions~$a_k$ on~$\B$ such that 
\begin{equation}\label{adtors}
\aligned
\omega^{\prime}-\omega&=\Re(a_k)\Im(\zeta^k),\\ \xi^{\prime}-\xi&=\frac{1}{2\i}\bar{a}_k\zeta^k,\\ \left(\phi^{\prime}\right)^i_k-\phi^i_k&=\Re(a_k \zeta^i )+\delta^i_k\Re(a_l)\Re(\zeta^l).\\
\endaligned
\end{equation}
\end{proof}
Denote by~$\mathcal{A}^{1,0}_{\mathcal{S}}$ and~$\mathcal{A}^{0,1}_{\mathcal{S}}$ the complex-valued~$\pi$-semibasic~$1$-forms on~$\B$ which can be written as~$a_k \zeta^k$ and~$a_k \bar{\zeta}^k$ respectively.  Here~$a_k$ are smooth complex-valued functions on~$\B$. Furthermore set
$$
\mathcal{A}^{p,q}_{\mathcal{S}}=\Lambda^p\left(\mathcal{A}^{1,0}_{\mathcal{S}}\right)\otimes \Lambda^q \left(\mathcal{A}^{0,1}_{\mathcal{S}}\right), 
$$
so that the complex-valued~$\pi$-semibasic~$k$-forms~$\mathcal{A}^k_{\mathcal{S}}$ on~$\B$ decompose as
$$
\mathcal{A}^k_{\mathcal{S}}=\bigoplus_{p+q=k}\mathcal{A}^{p,q}_{\mathcal{S}}. 
$$
\begin{proposition}\label{inte}
The almost complex structure~$\mathfrak{J}$ is integrable if and only if~$\Xi$ and the torsion components~$\tau^i$ lie in~$\mathcal{A}^{2,0}_{\mathcal{S}}\oplus \mathcal{A}^{1,1}_{\mathcal{S}}$. In particular for~$n \geq 3$ every torsion-free~$\B$-adapted connection gives rise to an integrable~$\mathfrak{J}$. 
\end{proposition}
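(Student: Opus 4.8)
The plan is to apply the Newlander--Nirenberg theorem in its dual formulation: $\mathfrak{J}$ is integrable if and only if the algebraic ideal generated by the $(1,\!0)$-forms for $\mathfrak{J}$ is closed under exterior differentiation, equivalently if and only if $(\d\alpha)^{0,2}=0$ for every locally defined $(1,\!0)$-form $\alpha$ on $\X$. By Lemma~\ref{charcomplex}, the $\nu$-pullbacks of the $(1,\!0)$-forms span, over $C^{\infty}(\B,\C)$, exactly the module generated by $\zeta^1,\ldots,\zeta^n,\xi$; since these one-forms are pointwise linearly independent, any local coframing of $(1,\!0)$-forms on $\X$ pulls back to a $C^{\infty}(\B,\C)$-basis of that module. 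As $\nu$ is a submersion, $\nu^*$ is injective and carries $(0,\!2)$-forms on $\X$ to $C^{\infty}(\B,\C)$-combinations of the $\bar\zeta^i\wedge\bar\zeta^j$ and $\bar\zeta^i\wedge\bar\xi$, no nonzero combination of which lies in the ideal $\mathcal{I}$ generated by $\zeta^1,\ldots,\zeta^n,\xi$ in the algebra of complex-valued differential forms on $\B$ (relative to the coframing $\{\zeta^i,\xi,\bar\zeta^i,\bar\xi,\omega,\phi^i_j\}$, a degree-two element of $\mathcal{I}$ must contain a factor $\zeta^i$ or $\xi$). A short diagram chase then shows that $\mathfrak{J}$ is integrable if and only if $\d\zeta^i\in\mathcal{I}$ for all $i$ and $\d\xi\in\mathcal{I}$.

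Next I would evaluate these two conditions using the structure and curvature equations of the adapted connection $\theta$. In $\d\zeta^i$, read off from~\eqref{struc}, every term other than $\tau^i$ visibly contains a factor $\zeta^l$ or $\xi$; and $\tau^i$, being $\pi$-semibasic, lies in $\mathcal{A}^{2,0}_{\mathcal{S}}\oplus\mathcal{A}^{1,1}_{\mathcal{S}}\oplus\mathcal{A}^{0,2}_{\mathcal{S}}$ with the first two summands contained in $\mathcal{I}$. Hence $\d\zeta^i\in\mathcal{I}$ precisely when the $\mathcal{A}^{0,2}_{\mathcal{S}}$-component of $\tau^i$ vanishes. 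Similarly, the definition~\eqref{curvatureforms} gives $\d\xi=\Xi-\i\,\xi\wedge(\bar\xi-2\omega)$, and the correction term lies in $\mathcal{I}$; moreover $\Xi$, being a component of the curvature $2$-form of the principal $G$-connection $\theta$, is $\pi$-semibasic and so lies in $\mathcal{A}^{2,0}_{\mathcal{S}}\oplus\mathcal{A}^{1,1}_{\mathcal{S}}\oplus\mathcal{A}^{0,2}_{\mathcal{S}}$. The same reasoning then shows $\d\xi\in\mathcal{I}$ precisely when the $\mathcal{A}^{0,2}_{\mathcal{S}}$-component of $\Xi$ vanishes. This establishes the stated equivalence.

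For the final assertion, suppose $\theta$ is torsion-free; then $\tau=0$ and it only remains to see that the $\mathcal{A}^{0,2}_{\mathcal{S}}$-part $\Xi^{0,2}$ of $\Xi$ vanishes when $n\geq 3$. Setting $\tau=0$ in the Bianchi identity~\eqref{bianchi} and extracting its $\mathcal{A}^{0,3}_{\mathcal{S}}$-component --- using that $\Omega$ and $\Phi$, like $\Xi$, are $\pi$-semibasic, so that the term $(\i(\Omega-\Xi)\id_n+\Phi)\wedge\zeta$ contributes nothing of type $(0,\!3)$ --- yields $\Xi^{0,2}\wedge\bar\zeta^i=0$ for $i=1,\ldots,n$. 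A $2$-form in the $n$ one-forms $\bar\zeta^1,\ldots,\bar\zeta^n$ whose product with each $\bar\zeta^i$ vanishes must be zero as soon as $n\geq 3$ (for every pair of indices there is a third index to wedge with), so $\Xi^{0,2}=0$ and $\mathfrak{J}$ is integrable. For $n=2$ this argument breaks down --- every pair of indices exhausts $\{1,2\}$ --- which is why an extra curvature condition is needed in that case.

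The main obstacle, and the only genuinely delicate point, is the reduction carried out in the first paragraph: transferring the Newlander--Nirenberg criterion from $\X$ to the computationally convenient setting on $\B$ via the submersion $\nu$, together with recording that the curvature forms $\Omega$, $\Xi$, $\Phi$ are $\pi$-semibasic. Once these are in place, the structure equation~\eqref{struc}, the Bianchi identity~\eqref{bianchi}, and the elementary fact about $\Lambda^2$ finish the proof with only routine computation.
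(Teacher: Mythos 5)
Your proposal is correct and follows essentially the same route as the paper: the equivalence is read off from Lemma~\ref{charcomplex}, the structure equation~\eqref{struc}, the definition of~$\Xi$ and Newlander--Nirenberg (you merely spell out the reduction from~$\X$ to~$\B$ that the paper treats as immediate), and the~$n\geq 3$ claim is obtained exactly as in the paper by expanding~$\Xi$, setting~$\tau=0$ in the Bianchi identity~\eqref{bianchi}, and extracting the~$\mathcal{A}^{0,3}_{\mathcal{S}}$-component to force the~$(0,\!2)$-coefficients of~$\Xi$ to vanish via a third index.
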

\begin{remark}
The integrability conditions for the almost complex structure~$\AJ$ can also be obtained by applying~\cite[Theorem 4]{MR812312}. We we will instead use Proposition \ref{strucprop}. 
\end{remark}
\begin{proof}[Proof of Proposition \ref{inte}]
Using the characterisation of~$\AJ$ provided in Lemma \ref{charcomplex}, the first statement is an immediate consequence of the structure equation \eqref{struc}, the definition of the curvature form~$\Xi$ in \eqref{curvatureforms}, and the Newlander-Nirenberg theorem. In order to prove the second statement we need to show that for~$n\geq 3$ the condition~$\tau=0$ implies~$\Xi \in \mathcal{A}^{2,0}_{\mathcal{S}}\oplus \mathcal{A}^{1,1}_{\mathcal{S}}$.  Since the curvature form~$\Xi$ is a~$\pi$-semibasic~$2$-form we may write
\begin{equation}\label{expxi}
\Xi=x_{kl}\zeta^k\wedge\zeta^l+\tilde{x}_{kl}\bar{\zeta}^k\wedge\zeta^l+\hat{x}_{kl}\bar{\zeta}^k\wedge\bar{\zeta}^l
\end{equation}
for some smooth complex-valued functions~$x_{kl},\tilde{x}_{kl},\hat{x}_{kl}$ on~$\B$. Writing out the Bian\-chi-identity \eqref{bianchi} in components for~$\tau=0$ gives 
$$
0=(\i(\Omega-\Xi)\delta^i_k+\Phi^i_k)\wedge\zeta^k+\i\,\Xi\,\wedge\bar{\zeta}^i,
$$
replacing~$\Xi$ with the expansion \eqref{expxi} we get
$$
0=\cdots+\i\hat{x}_{kl}\bar{\zeta}^k\wedge\bar{\zeta}^l\wedge\bar{\zeta}^i
$$
where the unwritten summands do not contain forms in~$\mathcal{A}^{0,3}_{\mathcal{S}}$. If~$n\geq 3$ there is for every choice of indices~$k,l$ an index~$i \neq k$,~$i \neq l$ so that the Bianchi-identity can hold if and only if~$\hat{x}_{kl}=0$ which is equivalent to~$\Xi$ lying in~$\mathcal{A}^{2,0}_{\mathcal{S}}\oplus \mathcal{A}^{1,1}_{\mathcal{S}}$. 
\end{proof}

\begin{remark}
Recall that~$H(2,2)$-structures $\pi : \B \to M$ correspond to oriented conformal structures of split-signature and thus are always torsion-free. In fact, the logical value of the curvature condition~$\Xi \in \mathcal{A}^{2,0}_{\mathcal{S}}\oplus \mathcal{A}^{1,1}_{\mathcal{S}}$ does not depend on the choice of a particular adapted torsion-free connection, but only on~$\B$. We leave it to the reader to check that this curvature condition corresponds to self-duality\footnote{As in the case of $(4,0)$-signature, a split-signature metric is called self-dual if its Weyl curvature tensor, considered as a bundle-valued 2-form, is its own Hodge-star.} of the associated oriented conformal~$4$-manifold of split-signature. 

In fact, it can be shown that for~$n=2$ the almost complex structure~$\AJ$ is integrable if and only if~$\theta$ is torsion-free and the associated split-signature conformal structure is self-dual. For~$n\geq 3$, the almost complex structure~$\AJ$ is integrable if and only if~$\theta$ is torsion-free.  
\end{remark}
Suppose~$\AJ$ is integrable, so that the total space of the bundle~$\rho : \X \to M$ is a complex~$(n+1)$-manifold. By construction the~$\rho$-fibres are smoothly embedded submanifolds of~$\X$ diffeomorphic to~$\GL(2,\R)/\mathrm{GL}(1,\C)$.  We will argue next, that~$(\X,\AJ)$ is a quasiholomorphic fibre bundle with fibre $\mathbb{CP}^1\setminus \mathbb{RP}^1$. 
\begin{defi}
Let~$\pi : B \to M$ be a fibre bundle with fibre $F$ and~$\AJ~$ an almost complex structure on~$B$. Then~$(B,\AJ)$ is  called \textit{quasiholomorphic} if
\begin{itemize}
\item[(i)]
the almost complex structure $\AJ$ is integrable,
\item[(ii)] there exists a complex structure $I$ on $F$, 
\item[(iii)] each $\pi$-fibre $B_p=\pi^{-1}(p)$ admits a complex structure with respect to which it is biholomorphic to $(F,I)$ and with respect to which the inclusion $B_p \hookrightarrow B$ is a holomorphic embedding.  
\end{itemize}
\end{defi}
Pulling back~$\xi$ with a local section~$s$ of~$\nu : \B \to \X$ gives a complex-valued $1$-form which pulls back to the $\rho$-fibres to be non-vanishing and which depends on~$s$ only up to complex multiples. It follows that the fibres of~$\rho : \X \to M$ are holomorphically embedded Riemann surfaces with respect to the complex structure induced by~$\xi$. Using the Maurer-Cartan form of $\mathrm{GL}(2,\R)$, it is easy to see that fibres are biholomorphic to $\mathbb{CP}^1\setminus\mathbb{RP}^1$. 
In~\cite{MR1692442, MR1804138} it was shown that for~$n \geq 3$ a~$2$-Segre structure is~$\beta$-integrable if and only if it is torsion-free and for~$n=2$ if and only if it is self-dual. Summarising we have:
\begin{theorem}\label{upperholom}
Let~$\pi : \B \to M$ be a~$\beta$-integrable~$2$-Segre structure. Then there exists a canonical almost complex structure~$\AJ$ on~$\X$ so that~$(\X,\AJ)$ is a quasiholomorphic fibre bundle with fibre~$\f$. 
\end{theorem}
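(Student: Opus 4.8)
The plan is to assemble Theorem~\ref{upperholom} from the pieces already in place, the only genuinely new ingredient being the existence of a \emph{canonical} almost complex structure, i.e.\ one that does not depend on the choice of adapted connection. First I would invoke the characterisation of $\beta$-integrability from~\cite{MR1692442,MR1804138} quoted just above the statement: for $n\geq 3$, $\beta$-integrability is equivalent to torsion-freeness, so $\mathcal{S}$ admits a torsion-free $\mathcal{S}$-adapted connection $\theta=(\omega,\xi,\phi)$; for $n=2$, $\beta$-integrability is equivalent to self-duality of the associated split-signature conformal $4$-manifold, which in particular is torsion-free and, by the remark following Proposition~\ref{inte}, satisfies the curvature condition $\Xi\in\mathcal{A}^{2,0}_{\mathcal{S}}\oplus\mathcal{A}^{1,1}_{\mathcal{S}}$. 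In either case I would then apply Proposition~\ref{inte}: since $\tau=0$, the hypotheses of that proposition are met (for $n\geq 3$ automatically, for $n=2$ by self-duality), so the almost complex structure $\mathfrak{J}=\mathfrak{J}_\theta$ produced by Lemma~\ref{charcomplex} is integrable, and hence $\X$ is a complex $(n+1)$-manifold.

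Next I would address canonicity. The point is that a $\beta$-integrable $\mathcal{S}$ need not carry a \emph{unique} torsion-free adapted connection, so I must check that different admissible choices give the same $\mathfrak{J}$. This is exactly Lemma~\ref{samecomplex}: any two $\mathcal{S}$-adapted connections with the same torsion induce the same almost complex structure on $\X$. For $n\geq 3$ all the relevant connections are torsion-free, so the claim is immediate. For $n=2$ one must be slightly more careful, since not every torsion-free adapted connection satisfies the self-duality curvature condition; but the remark after Proposition~\ref{inte} records that the truth value of $\Xi\in\mathcal{A}^{2,0}_{\mathcal{S}}\oplus\mathcal{A}^{1,1}_{\mathcal{S}}$ depends only on $\mathcal{S}$, and all torsion-free connections have the same torsion (namely zero), so Lemma~\ref{samecomplex} again applies and $\mathfrak{J}$ depends only on $\mathcal{S}$. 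Thus $\mathfrak{J}$ is canonical.

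It remains to verify that $(\X,\mathfrak{J})$ is quasiholomorphic with fibre $\f$, but this has essentially been carried out already in~\S\ref{quasiholo}. I would recall that argument: pulling back $\zeta^1,\ldots,\zeta^n,\xi$ along a local section $s$ of $\nu:\B\to\X$ gives a local frame for the $(1,\!0)$-forms; the $\zeta^i$ are $\pi$-semibasic hence $s^*\zeta^i$ is $\rho$-semibasic and vanishes on a $\rho$-fibre, so $s^*\xi$ restricts to a nowhere-zero complex $1$-form on each fibre, well defined up to a complex multiple. This exhibits each $\rho$-fibre as a holomorphically embedded Riemann surface, so $\mathfrak{J}$ is integrable and the fibres are complex submanifolds, i.e.\ $(\X,\mathfrak{J})$ is quasiholomorphic by the definition just given. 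Finally, the diagram built from $\lambda_f=\nu\circ\iota_f\circ\kappa$ and the Maurer--Cartan form $\alpha$ of $\GL(2,\R)$, together with the map $\lambda$ sending $(a_{ij})$ to $[a_{11}+\i a_{12}:a_{21}+\i a_{22}]$, identifies each fibre biholomorphically with $Q=\GL(2,\R)/\GL(1,\C)$ and in turn with $\f$.

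The step I expect to be the main obstacle is the canonicity argument in the borderline dimension $n=2$: one has to be sure that the self-duality condition, although not satisfied by \emph{every} torsion-free adapted connection, is nonetheless a property of $\mathcal{S}$ alone and that Lemma~\ref{samecomplex} can still be invoked across the full family of admissible connections. For $n\geq 3$ everything reduces cleanly to Propositions~\ref{inte} and Lemmas~\ref{charcomplex}--\ref{samecomplex}, and the fibre identification is already done; so beyond bookkeeping the theorem is essentially a corollary of the preceding results.
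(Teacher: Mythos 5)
Your proposal is correct and follows essentially the same route as the paper's own proof: pick a torsion-free $\mathcal{S}$-adapted connection, get integrability from Proposition~\ref{inte} (together with the curvature condition in the~$n=2$ case), quasiholomorphicity and the fibre identification from \S\ref{quasiholo}, and canonicity from Lemma~\ref{samecomplex}. Your explicit treatment of the borderline case~$n=2$ (self-duality as a property of~$\mathcal{S}$ alone, via the remark after Proposition~\ref{inte}) is merely a more careful spelling-out of what the paper leaves implicit.
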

\begin{proof}
We pick an~$\B$-adapted connection without torsion and let~$\AJ$ be the associated almost complex structure on~$\X$ whose existence is guaranteed by Lemma \ref{charcomplex}. Then by Proposition \ref{inte} and the above remarks, the almost complex structure~$\AJ$ is integrable and~$(\X,\AJ)$ is a quasiholomorphic fibre bundle with fibre~$\f$. Finally, by Lemma \ref{samecomplex}, any other~$\B$-adapted torsion-free connection gives rise to the same almost complex structure~$\AJ$.  
\end{proof}
\section{Reductions of~$\beta$-integrable~2-Segre structures}\label{holo}
We will henceforth consider the~$\beta$-integrable case and assume~$\rho : \X \to M$ to be equipped with its canonical integrable almost  complex structure~$\AJ$ with respect to which~$(\X,\AJ)$ is a quasiholomorphic fibre bundle. By construction the sections of the bundle~$\rho : \X \to M$~correspond to reductions of the principal~$H$-bundle~$\pi : \B \to M$~with structure group~$\H$. Note that an $\H$-reduction of a $\beta$-integrable $(2,n)$-Segre structure $\pi : \B \to M$ equips $M$ with an almost complex structure preserving the Segre cones $\mathcal{S}_p$ and for which the $\beta$-planes are totally real. In this section we will show that the torsion-free~$\H$-reductions of~$\B$ are in one-to-one correspondence with the sections~$\sigma : M \to \X$ having holomorphic image~$\sigma(M)\subset\X$. This will done using exterior differential systems (\eds). The notation and terminology for \eds~are chosen to be consistent with~\cite{MR1083148}. 

A basis for the Lie algebra of~$\H$ is given by 
$$
a\otimes \id_n, \quad \id_2\otimes e^i_k.
$$
Suppose~$R \to M$ is a torsion-free~$\H$-structure with adapted connection~$\theta$. Write 
$$
\theta=a\alpha\otimes \id_n+\id_2\otimes \beta,
$$
for some~$1$-form~$\alpha$ and some~$\mathfrak{gl}(n,\R)$-valued~$1$-form~$\beta$ on~$R$. Let~$\zeta$ denote the pullback of the canonical~$\C^n$-valued~$1$-form to~$R$, then~$\zeta$ satisfies 
\begin{equation}\label{strucred}
\d\zeta=-\left(\i \alpha\, \id_n+\beta\right) \wedge \zeta,
\end{equation}
as was already observed in~\cite{MR1898190}. 

We will need the following Lemma whose proof is straightforward and thus omitted: 
\begin{lemma}\label{compsub}
Let~$(X,J)$ be a complex~$(n+1)$-manifold,~$(\mu^1,\ldots,\mu^n,\kappa) \in \mathcal{A}^1(X,\C)$ a basis for the~$(1,\!0)$-forms of~$J$ and~$f : \Sigma \to X$ a~$2n$-submanifold with
\begin{equation}\label{nondeg}
f^*\left(\i\mu^1\wedge\bar{\mu}^1\wedge\cdots\wedge\i\mu^n\wedge\bar{\mu}^n\right)\neq 0. 
\end{equation}
Then~$(f,\Sigma)$ is a complex submanifold if and only if 
$$
f^*\left(\kappa\wedge\mu^1\wedge\cdots\wedge\mu^n\right)=0.
$$ Moreover through every point~$p \in X$ passes such a complex submanifold.  
\end{lemma}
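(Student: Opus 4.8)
The plan is to treat this as a pointwise linear-algebra statement followed by a standard integrability/Frobenius-type observation. First I would establish the local characterization: a $2n$-submanifold $f : \Sigma \to X$ satisfying the nondegeneracy condition \eqref{nondeg} is a complex submanifold if and only if the pullbacks $f^*\mu^1,\dots,f^*\mu^n,f^*\kappa$ span a complex $n$-dimensional subspace of $\mathcal{A}^1(\Sigma,\C)$ at each point, i.e. the $2n+2$ real one-forms $\Re(f^*\mu^i),\Im(f^*\mu^i),\Re(f^*\kappa),\Im(f^*\kappa)$ have complex rank exactly $n$ rather than $n+1$. The nondegeneracy \eqref{nondeg} already forces $f^*\mu^1,\dots,f^*\mu^n$ to be $\C$-linearly independent on $\Sigma$ (and their conjugates too, so together they account for all $2n$ real dimensions), hence $f^*\kappa$ must be a $C^\infty(\Sigma,\C)$-linear combination of the $f^*\mu^i$ and the $\bar{f^*\mu}^j$. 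Writing $f^*\kappa = a_i\, f^*\mu^i + b_j\, \bar{f^*\mu}^j$, the subspace spanned by the $(1,0)$-pullbacks is a valid almost-complex structure on $\Sigma$ making $f$ holomorphic precisely when the $b_j$ all vanish. The condition $f^*(\kappa\wedge\mu^1\wedge\cdots\wedge\mu^n)=0$ is exactly $\sum_j b_j\,\bar{f^*\mu}^j\wedge f^*\mu^1\wedge\cdots\wedge f^*\mu^n = 0$, and since $\bar{f^*\mu}^1,\dots,\bar{f^*\mu}^n, f^*\mu^1,\dots,f^*\mu^n$ are pointwise linearly independent (again by \eqref{nondeg}), this wedge vanishes if and only if every $b_j=0$. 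So the stated equation is equivalent to $f^*\kappa$ being of type $(1,0)$ on $\Sigma$, which is equivalent to $\Sigma$ carrying an almost complex structure (the one whose $(1,0)$-forms are spanned by the $f^*\mu^i$) for which $f$ is pseudoholomorphic; integrability of this almost complex structure is automatic since $f$ is an immersion onto its image and $X$ is a complex manifold — equivalently, one invokes that a maximal-rank pullback of the $(1,0)$-bundle of a complex manifold is always integrable, or applies Newlander–Nirenberg after checking closedness of the ideal, which follows from $\d\mu^i, \d\kappa$ lying in the ideal generated by $\mu^1,\dots,\mu^n,\kappa$ on $X$.

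Second, for the final sentence — that through every point $p\in X$ there passes such a complex submanifold — I would produce one explicitly by a graph construction in a local coordinate chart adapted to $(\mu^1,\dots,\mu^n,\kappa)$. Choose local holomorphic coordinates $w^1,\dots,w^{n+1}$ on a neighborhood $U$ of $p$ such that $\mu^i = \d w^i$ and $\kappa = \d w^{n+1}$ (possible after a $\GL(n+1,\C)$-valued change of coframe, which does not affect either hypothesis or conclusion since both are coframe-independent in the appropriate sense). Then the slice $\Sigma = \{w^{n+1} = c\}$ through $p$ (where $c$ is the $(n+1)$st coordinate of $p$), with $f$ the inclusion, is a complex $n$-dimensional submanifold: $f^*\kappa = 0$ so $f^*(\kappa\wedge\mu^1\wedge\cdots\wedge\mu^n)=0$ trivially, and $f^*(\i\mu^1\wedge\bar\mu^1\wedge\cdots\wedge\i\mu^n\wedge\bar\mu^n) = \i^n\,\d w^1\wedge\d\bar w^1\wedge\cdots\wedge\d w^n\wedge\d\bar w^n$ restricted to the slice, which is a nonvanishing volume form there. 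This furnishes the desired complex submanifold through $p$.

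I expect the only real subtlety to be the integrability step in the first part: one must be sure that the rank-$n$ distribution of $(1,0)$-pullbacks on $\Sigma$ actually defines an integrable almost complex structure and not merely a formal one. The clean way is to note that the Pfaffian ideal on $X$ generated by $\mu^1,\dots,\mu^n,\kappa$ is differentially closed (being the ideal of $(1,0)$-forms on a complex manifold: $\d$ of a $(1,0)$-form is a sum of a $(2,0)$-form and a $(1,1)$-form, both of which lie in this ideal), so its pullback to $\Sigma$ is differentially closed, and since $f^*\kappa \equiv 0$ modulo $f^*\mu^1,\dots,f^*\mu^n$ the pulled-back ideal is generated by the $n$ independent forms $f^*\mu^i$; a differentially closed Pfaffian system of $n$ complex forms whose real and imaginary parts have rank $2n$ on a $2n$-manifold is exactly an integrable almost complex structure by Newlander–Nirenberg. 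Everything else is routine exterior algebra using the pointwise linear independence guaranteed by \eqref{nondeg}.
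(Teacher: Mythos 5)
Your main equivalence is essentially the paper's own argument: the nondegeneracy \eqref{nondeg} makes $f^*\mu^1,\dots,f^*\mu^n,\overline{f^*\mu^1},\dots,\overline{f^*\mu^n}$ a pointwise basis of the complexified cotangent space of $\Sigma$, so $f^*\kappa=a_if^*\mu^i+b_j\overline{f^*\mu^j}$, the condition $f^*(\kappa\wedge\mu^1\wedge\cdots\wedge\mu^n)=0$ is exactly $b_j\equiv 0$, and then the almost complex structure on $\Sigma$ whose $(1,\!0)$-forms are spanned by the $f^*\mu^i$ is integrable and makes $f$ holomorphic. Your justification of the integrability step --- the ideal of $(1,\!0)$-forms on $X$ is differentially closed, its pullback is generated by the $f^*\mu^i$ once $f^*\kappa$ lies in their span, then Newlander--Nirenberg --- correctly fills in a point the paper only asserts.

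The one genuine misstep is in the existence part. You cannot in general choose holomorphic coordinates with $\mu^i=\d w^i$ and $\kappa=\d w^{n+1}$ on a neighbourhood, since the given coframe need not consist of closed forms; and the proposed reduction is not harmless, because \eqref{nondeg} is \emph{not} invariant under a $\GL(n+1,\C)$-valued change of coframe (already interchanging $\kappa$ with one of the $\mu^i$ changes which slices satisfy \eqref{nondeg}), so the appeal to ``coframe-independence'' does not cover the hypothesis you need to verify. The statement is nevertheless easy, as the paper indicates: choose holomorphic coordinates $w$ centred at $p$ whose differentials agree with $(\mu^1,\dots,\mu^n,\kappa)$ at the single point $p$ (a constant linear change of coordinates suffices), and let $\Sigma$ be the slice $\{w^{n+1}=0\}$ through $p$. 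Since $\Sigma$ is a complex hypersurface, the forms $f^*\kappa,f^*\mu^1,\dots,f^*\mu^n$ are $n+1$ forms of type $(1,\!0)$ on an $n$-dimensional complex manifold, so $f^*(\kappa\wedge\mu^1\wedge\cdots\wedge\mu^n)=0$ holds automatically, while \eqref{nondeg} holds at $p$ by construction and hence on a neighbourhood of $p$ in $\Sigma$, to which one shrinks. With the coordinates normalised only at the point rather than on an open set, your slice construction goes through.
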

On~$\B$ define the exterior differential system 
$$
\mathcal{I}=\langle \xi\wedge\zeta^1\wedge\cdots\wedge\zeta^n\rangle
$$ 
together with the independence condition
$$
Z=\i\zeta^1\wedge\bar{\zeta}^1\wedge\cdots\wedge\i\zeta^n\wedge\bar{\zeta}^n.
$$
The \eds~$(\mathcal{I},Z)$ is of interest because of the following:
\begin{lemma}\label{lemma1}
Let~$\sigma : M \to \X$ be an~$\H$-reduction of~$\B$ and~$\tilde{\sigma} : U \to \B$ a local coframing covering~$\sigma$. Then~$\tilde{\sigma}$ is an integral manifold of~$(\mathcal{I},Z)$ if and only if~$\sigma\vert_U : U \to \X$ is a complex submanifold.   
\end{lemma}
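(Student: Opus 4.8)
The plan is to unwind both sides of the asserted equivalence in terms of the coframing on $\B$ and then appeal to Lemma~\ref{compsub}. First, recall that an $\H$-reduction $\sigma : M \to \X$ of $\B$ is, by the construction of $\X = \B/\H$, the same thing as an $\H$-subbundle $R \to M$ of $\pi : \B \to M$; a local coframing $\tilde{\sigma} : U \to \B$ covering $\sigma$ is a local section of $R \to M$, and the submanifold $\tilde{\sigma}(U) \subset \B$ lies inside $R$. Since the $(1,\!0)$-forms of $\AJ$ on $\X$ pull back under $\nu : \B \to \X$ to linear combinations of $\zeta^1,\ldots,\zeta^n,\xi$ (Lemma~\ref{charcomplex}), and since $\rho = \rho\circ\nu$ identifies $\sigma(M)$ with $\nu(R)$, the statement that $\sigma|_U : U \to \X$ is a complex submanifold of $(\X,\AJ)$ is precisely the condition, via Lemma~\ref{compsub} applied with $(\mu^1,\ldots,\mu^n,\kappa)$ a local $(1,\!0)$-coframing on $\X$ descending from $(\zeta^1,\ldots,\zeta^n,\xi)$, that $\sigma^*(\kappa\wedge\mu^1\wedge\cdots\wedge\mu^n) = 0$, the non-degeneracy hypothesis \eqref{nondeg} being automatic because $\zeta^1,\ldots,\zeta^n$ are a coframing of $M$ pulled back to $\B$.

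Next I would pull everything back to $\B$ along $\tilde{\sigma}$. Because $\nu\circ\tilde{\sigma} = \sigma|_U$ (up to the identification of $U$ with its image) and $\nu^*\mu^i = \zeta^i$, $\nu^*\kappa = \xi$ modulo the $\zeta^j$, we get
$$
(\sigma|_U)^*\left(\kappa\wedge\mu^1\wedge\cdots\wedge\mu^n\right) = \tilde{\sigma}^*\left(\xi\wedge\zeta^1\wedge\cdots\wedge\zeta^n\right),
$$
the ambiguity in $\kappa$ by multiples of $\mu^j$ dropping out of the wedge. Hence $\sigma|_U$ is a complex submanifold if and only if $\tilde{\sigma}^*(\xi\wedge\zeta^1\wedge\cdots\wedge\zeta^n) = 0$, which is exactly the condition that $\tilde{\sigma}$ annihilate the generator of $\mathcal{I} = \langle\,\xi\wedge\zeta^1\wedge\cdots\wedge\zeta^n\,\rangle$. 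The independence condition $Z = \i\zeta^1\wedge\bar\zeta^1\wedge\cdots\wedge\i\zeta^n\wedge\bar\zeta^n$ is satisfied by $\tilde\sigma$ since $\zeta^1,\ldots,\zeta^n$ descend to a coframing of $M$, so $\tilde\sigma^*Z \neq 0$; thus $\tilde\sigma$ is an integral manifold of $(\mathcal{I},Z)$ if and only if the same wedge vanishes. Combining the two equivalences yields the claim.

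The one point requiring care—and the main obstacle—is the passage between objects on $\X$ and objects on $\B$: one must check that a local $(1,\!0)$-coframing $(\mu^i,\kappa)$ on an open set of $\X$ can be chosen so that $\nu^*\mu^i = \zeta^i$ and $\nu^*\kappa \equiv \xi$ modulo the $\zeta^j$, i.e.\ that the local section $\tilde\sigma$ of $\nu$ can be used to transport the coframing consistently. This is where one invokes that $\zeta^1,\ldots,\zeta^n$ are $\nu$-semibasic (being $\pi$-semibasic) while $\xi$ is $\nu$-semibasic and transforms by a complex scalar under the $\H$-action \eqref{rightactioncon}, exactly the facts assembled just before Lemma~\ref{charcomplex}; these guarantee that $\{\zeta^1,\ldots,\zeta^n,\xi\}$ descends, up to the scaling ambiguity in $\xi$, to a well-defined $(1,\!0)$-coframing on $\X$, and that ambiguity is harmless in the top-degree wedge. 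Everything else is bookkeeping with pullbacks along $\nu\circ\tilde\sigma = \sigma|_U$.
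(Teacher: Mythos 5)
Your proposal follows essentially the same route as the paper: invoke Lemma~\ref{compsub}, transfer the wedge conditions through $\nu\circ\tilde{\sigma}=\sigma\vert_U$, and get the non-degeneracy from the fact that $\tilde{\sigma}$ is a $\pi$-section, so in substance the argument is the paper's own. The one statement that is not right as written is $\nu^*\mu^i=\zeta^i$ (and the accompanying claim that $\zeta^1,\ldots,\zeta^n$ themselves descend to well-defined forms on $\X$): by \eqref{rightactioncan} and \eqref{rightactioncon}, under the right action of $e^{\i\varphi}\cdot b\in\H$ the forms transform as $\zeta\mapsto (e^{-\i\varphi}\cdot b^{-1})\zeta$ and $\xi\mapsto e^{-2\i\varphi}\xi$, so only the spans of $\{\zeta^k\}$ and of $\{\zeta^k,\xi\}$ descend, not the individual forms. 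The paper therefore defines $\mu^i=s^*\zeta^i$, $\kappa=s^*\xi$ for a local section $s$ of $\nu:\B\to\X$ and obtains $\nu^*\mu=(R_t)^*\zeta$, $\nu^*\kappa=e^{-2\i\varphi}\xi$, so the comparison identities you write hold only up to nowhere-vanishing factors, namely $(\det b)^{-2}$ for the $Z$-type wedge and $e^{-(n+2)\i\varphi}/\det b$ for the generator of $\mathcal{I}$. Since only the vanishing or non-vanishing of these top-degree wedges enters Lemma~\ref{compsub} and the definition of an integral manifold of $(\mathcal{I},Z)$, this discrepancy is harmless, and your conclusion stands once the asserted equalities are weakened to equalities up to a nowhere-vanishing multiple; your closing remark correctly identifies semibasicity and $\H$-equivariance as the mechanism, it just overstates what descends.
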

\begin{proof}
Let~$s : \rho^{-1}(U) \to \B$ be a local section of the bundle~$\nu : \B \to \X$ and let~$\mu^i=s^*\zeta^i$ for~$i=1,\ldots,n$ and~$\kappa=s^*\xi$ be a local basis for the~$(1,\!0)$-forms on~$\rho^{-1}(U)$. Then
$$
\aligned
\nu^*\mu^i&=(s\circ \nu)^*\zeta^i=(R_t)^*\zeta^i,\\
\nu^*\kappa&=(s\circ \nu)^*\xi=(R_t)^*\xi, 
\endaligned
$$
for some smooth function~$t : \pi^{-1}(U) \to \H$. Recall that for~$e^{i\varphi}\cdot b \in \H$ we have
$$
\aligned
\big(R_{e^{\i\varphi}\cdot b}\big)^*\xi&=e^{-2\i\varphi}\xi,\\
\big(R_{e^{\i\varphi}\cdot b}\big)^*\zeta&=\left(e^{-\i\varphi} \cdot b^{-1}\right)\zeta.
\endaligned
$$ 
This yields
$$
\nu^*\left(\i\mu^1\wedge\bar{\mu}^1\wedge\cdots\wedge \i\mu^n\wedge\bar{\mu}^n\right)=(\det b)^{-2}\,Z\neq 0
$$
for some smooth map~$b : \pi^{-1}(U) \to \GL(n,\R)$ and
$$
\nu^*\kappa=e^{-2\i\varphi}\xi
$$ 
for some smooth function~$\varphi : \pi^{-1}(U) \to \R$. Hence we get 
$$
\left(\sigma\vert_U\right)^*\left(\i\mu^1\wedge\bar{\mu}^1\wedge\cdots\wedge\i\mu^n\wedge\bar{\mu}^n\right)=\left((\det b)^{-2} \circ \tilde{\sigma}\right)\tilde{\sigma}^*Z
$$
which vanishes nowhere since~$\tilde{\sigma}$ is a~$\pi$-section. Therefore according to Lemma \ref{compsub},~$\sigma\vert_U : U \to \X$ is a complex submanifold if and only if 
$$
\left(\sigma\vert_U\right)^*\left(\kappa\wedge\mu^1\wedge\cdots\wedge\mu^n\right)=\left(\left(\frac{e^{-(n+2)\i\varphi}}{\det b}\right)\circ \tilde{\sigma}\right)\tilde{\sigma}^*\left(\xi\wedge\zeta^1\wedge\cdots\wedge\zeta^n\right)=0. 
$$
\end{proof}
Recall that~$\H \subset \GL(n,\C)$ and we can thus look for reductions~$\sigma : M \to \X$ whose associated almost complex structure~$\AJ_{\sigma}$ is integrable. 
\begin{proposition}\label{prop1}
Let~$\sigma : M \to \X$ be an~$\H$-reduction of~$\pi : \B \to M$. Then the following two statements are equivalent:
\begin{itemize}
\item[(i)] The almost complex structure~$\AJ_{\sigma}$ is integrable. 
\item[(ii)] Any local coframing~$\tilde{\sigma} : U \to \B$ covering~$\sigma$ is an integral manifold of~$(\mathcal{I},Z)$. 
\end{itemize}
\end{proposition}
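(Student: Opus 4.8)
The plan is to show that statements (i) and (ii) are each equivalent to the single algebraic condition that~$\tilde\sigma^*\xi$ be a~$(1,\!0)$-form for~$\AJ_{\sigma}$, i.e.~that its~$(0,\!1)$-component with respect to~$\AJ_{\sigma}$ vanish. Throughout I fix the torsion-free~$\mathcal{S}$-adapted connection~$\theta=(\omega,\xi,\phi)$ used to define~$\AJ$; since~$\mathcal{S}$ is~$\beta$-integrable one has~$\tau=0$ by~\cite{MR1692442,MR1804138} (recall that~$(2,2)$-Segre structures are automatically torsion-free). Because~$\H\subset\GL(n,\C)$ preserves the identification~$\R^{2n}\simeq\C^n$, the~$\H$-reduction determined by~$\sigma$ induces an almost complex structure~$\AJ_{\sigma}$ on~$M$ whose~$(1,\!0)$-forms, along a local coframing~$\tilde\sigma:U\to\B$ covering~$\sigma$, are precisely the span of~$\tilde\sigma^*\zeta^1,\ldots,\tilde\sigma^*\zeta^n$; these are linearly independent since~$\tilde\sigma$ is a coframing, so~$\AJ_{\sigma}$ is a genuine almost complex structure and~$\{\tilde\sigma^*\zeta^k,\tilde\sigma^*\bar\zeta^k\}$ is a coframing of~$U$.

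First I would pull the structure equation~\eqref{struc} of Proposition~\ref{strucprop} back by~$\tilde\sigma$, with~$\tau=0$, and write the~$(0,\!1)$-component of~$\tilde\sigma^*\xi$ as~$\ell_j\,\tilde\sigma^*\bar\zeta^j$ for smooth functions~$\ell_j:U\to\C$. The essential point is a bidegree count: on the right-hand side of~\eqref{struc} every term of the form~$\psi\wedge\zeta$ with~$\psi$ a~$1$-form contributes nothing to the~$(0,\!2)$-part, because~$\zeta^k$ is of type~$(1,\!0)$; hence the~$(0,\!2)$-part of~$\d(\tilde\sigma^*\zeta^k)$ comes only from the term~$-\i\,\xi\,\id_n\wedge\bar\zeta$ and equals~$-\i\,\ell_j\,\tilde\sigma^*\bar\zeta^j\wedge\tilde\sigma^*\bar\zeta^k$. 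By the Newlander--Nirenberg theorem,~$\AJ_{\sigma}$ is integrable if and only if~$\d(\tilde\sigma^*\zeta^k)$ lies, for each~$k$, in the algebraic ideal generated by~$\tilde\sigma^*\zeta^1,\ldots,\tilde\sigma^*\zeta^n$, i.e.~if and only if each of these~$(0,\!2)$-parts vanishes. Running over all~$k$ and using that the~$2$-forms~$\tilde\sigma^*\bar\zeta^j\wedge\tilde\sigma^*\bar\zeta^k$ with~$j<k$ are linearly independent (here~$n\geq 2$ enters, which always holds), this forces~$\ell_j=0$ for every~$j$; that is,~$\tilde\sigma^*\xi=x_i\,\tilde\sigma^*\zeta^i$ for some smooth functions~$x_i:U\to\C$.

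It then remains to match this with condition (ii). By the non-degeneracy~$\tilde\sigma^*Z\neq 0$, which holds because~$\tilde\sigma$ is a coframing, the computation already carried out in the proof of Lemma~\ref{compsub} (and reused in Lemma~\ref{lemma1}) shows that~$\tilde\sigma^*\xi=x_i\,\tilde\sigma^*\zeta^i$ is equivalent to~$\tilde\sigma^*\bigl(\xi\wedge\zeta^1\wedge\cdots\wedge\zeta^n\bigr)=0$, which together with~$\tilde\sigma^*Z\neq 0$ is exactly the statement that~$\tilde\sigma$ is an integral manifold of~$(\mathcal{I},Z)$. Since both ends of this chain of equivalences depend only on~$\sigma$ and not on the particular choice of coframing~$\tilde\sigma$ covering~$\sigma$ (the integrability of~$\AJ_{\sigma}$ obviously so, and the vanishing of~$\tilde\sigma^*(\xi\wedge\zeta^1\wedge\cdots\wedge\zeta^n)$ because this form transforms by a nonvanishing factor under the~$\H$-action, as in the proof of Lemma~\ref{lemma1}), one may pass from ``a local coframing'' to ``any local coframing'' in (ii), which completes the argument.

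I do not expect a genuine obstacle here once the framework of~\S\ref{holo} is in place: the real content is the bidegree bookkeeping --- isolating that, after pullback by~$\tilde\sigma$, only the~$\xi\wedge\bar\zeta$ term in~\eqref{struc} feeds the~$(0,\!2)$-part of~$\d\zeta^k$ --- together with the observation that varying the free index~$k$ forces the whole coefficient matrix~$(\ell_j)$ to vanish rather than merely some symmetrisation of it. A minor point to keep straight is that~$\AJ_{\sigma}$ is defined from the~$\H$-structure alone, independently of any connection on the reduction~$R$, whereas the torsion-free connection~$\theta$ lives on~$\B$; no torsion-freeness of~$R$ itself is needed.
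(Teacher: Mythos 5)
Your proof is correct and follows essentially the same route as the paper: pull back the structure equation \eqref{struc} (with $\tau=0$) by $\tilde\sigma$, decompose $\tilde\sigma^*\xi$ into $(1,\!0)$- and $(0,\!1)$-parts with respect to $\AJ_\sigma$, apply Newlander--Nirenberg to see that integrability is exactly the vanishing of the $(0,\!1)$-part, and identify that vanishing with the integral manifold condition for $(\mathcal{I},Z)$ using $\tilde\sigma^*Z\neq 0$. Your explicit remark that $n\geq 2$ is what forces all coefficients $\ell_j$ to vanish, and the check of independence of the choice of coframing covering $\sigma$, are points the paper leaves implicit but do not change the argument.
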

\begin{proof} Since~$\tilde{\sigma}$ is a~$\pi$-section we have~$\tilde{\sigma}^*Z \neq 0$. Write~$\chi^i=\tilde{\sigma}^*\zeta^i$. The local coframing~$\tilde{\sigma}$ is adapted to the~$\H$-reduction~$\sigma$ and thus the forms~$\chi^i$ are a local basis of the~$(1,\!0)$-forms of~$\AJ_{\sigma}$. By Newlander-Nirenberg~$\AJ_{\sigma}$ is integrable if and only if there exist complex-valued~$1$-forms~$\pi^i_k$ such that
$$
d\chi^i=\pi^i_k\wedge\chi^k. 
$$
Using the structure equation \eqref{struc} we get 
\begin{equation}\label{step}
\d\chi^i=\tilde{\sigma}^*d\zeta^i=\tilde{\pi}^i_k\wedge\chi^k-\i\tilde{\sigma}^*\xi\wedge\bar{\chi}^i
\end{equation}
for some complex-valued~$1$-forms~$\tilde{\pi}^i_k$. Write 
$$
\tilde{\sigma}^*\xi=x_k\chi^k+y_k\bar{\chi}^k 
$$
for some smooth complex-valued functions~$x_k,y_k$ on~$U$. Then \eqref{step} implies that~$\AJ_{\sigma}$ is integrable on~$U$ if and only if the functions~$y_k$ all vanish. This condition is equivalent to~$\tilde{\sigma}$ being an integral manifold of~$(\mathcal{I},Z)$.  
\end{proof}

We are now ready to prove: 
\begin{theorem}\label{main}
Let~$\pi : \B \to M$ be a~$\beta$-integrable~$2$-Segre structure. Then an~$\H$-reduction~$R \subset \B$ is torsion-free if and only if~$\nu(R)\subset \X$ is a complex submanifold.  
\end{theorem}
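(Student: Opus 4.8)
The plan is to recast both sides of the equivalence as a single condition on the restriction to $R$ of the complex-valued $1$-form $\xi$, and then to realise that condition by an explicit choice of torsion-free $\Se$-adapted connection. Since $\Se$ is $\beta$-integrable it is torsion-free, so we may fix a torsion-free $\Se$-adapted connection $\theta=(\omega,\xi,\phi)$; by Lemma~\ref{samecomplex} it induces the canonical $\AJ$ on $\X$. As $\xi$ is $\nu$-semibasic and $R\to M$ is a principal $\H$-bundle, $\xi$ restricts to a $\pi$-semibasic form on $R$, and we write $\xi|_R=p_k\zeta^k+q_k\bar\zeta^k$ with $p_k,q_k:R\to\C$ smooth. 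Let $\sigma:M\to\X$ be the section of $\rho$ corresponding to $R$. Every point of $R$ lies in the image of a local coframing $\tilde\sigma:U\to\B$ covering $\sigma$, and every such $\tilde\sigma$ satisfies $\tilde\sigma^*Z\neq0$; hence Lemma~\ref{lemma1} shows that $\nu(R)=\sigma(M)$ is a complex submanifold of $\X$ precisely when each such $\tilde\sigma$ is an integral manifold of $(\mathcal{I},Z)$, that is, precisely when $q_k\equiv0$, i.e.\ when $\xi|_R$ is of type $(1,\!0)$. By the second identity in~\eqref{adtors} the $\xi$-parts of any two torsion-free $\Se$-adapted connections differ by a form of type $(1,\!0)$, so this condition is independent of the chosen torsion-free connection. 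It therefore suffices to show that $R$ is torsion-free if and only if $\xi|_R$ is of type $(1,\!0)$.

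The link between these is the following. With respect to the basis $a\otimes\id_n,\ \id_2\otimes e^i_k$ of the Lie algebra $\mathfrak h$ of $\H$, a $\g$-valued $1$-form of the shape~\eqref{defcon2} takes values in $\mathfrak h$ exactly when its $\xi$-part vanishes. Hence, if $\theta_0=(\omega_0,\xi_0,\phi_0)$ is any torsion-free $\Se$-adapted connection with $\xi_0|_R=0$, then $\theta_0|_R$ is a principal $\H$-connection on $R$ whose torsion is the restriction of that of $\theta_0$, hence zero, so $R$ is torsion-free. Conversely, suppose $R$ is torsion-free and let $\psi$ be a torsion-free principal $\H$-connection on $R$. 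Extend $\psi$ to a principal $G$-connection $\theta_0$ on $\B$. Its torsion $2$-form is semibasic and $G$-equivariant, and along $R$ it agrees with the (vanishing) torsion of $\psi$, hence vanishes on $R$; since $R$ meets every fibre of $\B\to M$, every point of $\B$ has the form $r\cdot g$ with $r\in R$ and $g\in G$, so $G$-equivariance forces the torsion of $\theta_0$ to vanish identically. Thus $\theta_0$ is a torsion-free $\Se$-adapted connection with $\theta_0|_R=\psi$ taking values in $\mathfrak h$, i.e.\ with $\xi_0|_R=0$; in particular $\xi|_R$ is of type $(1,\!0)$. This settles one implication.

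For the converse implication, assume $\xi|_R=p_k\zeta^k$. By~\eqref{adtors} the torsion-free $\Se$-adapted connections are precisely those obtained from $\theta$ via smooth functions $a_k:\B\to\C$ with $\xi'=\xi+\tfrac1{2\i}\bar a_k\zeta^k$; for the resulting $\theta'$ to again be a principal $G$-connection the $a_k$ must transform under $G$ in a fixed representation, and combining~\eqref{rightactioncon} and~\eqref{rightactioncan} one checks that under $e^{\i\varphi}\cdot b\in\H$ this amounts to $a_k\mapsto e^{\i\varphi}a_m\,b^m_k$ — which is exactly the transformation law obeyed on $R$ by $2\i\bar p_k$. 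Since $\B\cong R\times_{\H}G$, the $\H$-equivariant function $2\i\bar p$ on $R$ extends to a $G$-equivariant function $a$ on $\B$, and the associated torsion-free connection $\theta_0$ satisfies $\xi_0|_R=p_k\zeta^k+\tfrac1{2\i}(-2\i p_k)\zeta^k=0$; by the second paragraph $R$ is torsion-free. This completes the chain of equivalences: $R$ torsion-free $\iff$ $\xi|_R$ of type $(1,\!0)$ $\iff$ $\nu(R)$ a complex submanifold. The step deserving most care is the converse half of the link — turning a torsion-free $\H$-connection on $R$ into a torsion-free $\Se$-adapted connection on all of $\B$ — which relies on the tensoriality of the torsion and on $R$ meeting every $G$-orbit; the sole genuine computation is matching the $\H$-representation of the $a_k$ with that of $2\i\bar p_k$, and this is immediate from~\eqref{rightactioncon} and~\eqref{rightactioncan}.
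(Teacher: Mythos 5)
Your argument is correct, and it reaches the theorem by a route that differs from the paper's in both implications, even though the pivot is the same: like the paper, you reduce ``$\nu(R)$ is a complex submanifold'' (via Lemma~\ref{lemma1}, essentially the content of Proposition~\ref{prop1}) to the condition that $\xi|_R$ be of type $(1,\!0)$. Where the paper then works entirely on $R$ --- in one direction writing down the forms $\alpha,\beta$ explicitly and verifying the torsion-free structure equation \eqref{struc2}, and in the other direction expanding the pullback of $(\omega,\xi,\phi)$ as in \eqref{pullconn2} and comparing the $\bar\zeta^k\wedge\bar\zeta^i$-components of the two structure equations to force $\tilde x_k=0$ --- you work on $\B$: you modify a global torsion-free $\Se$-adapted connection by an equivariantly extended function $a=2\i\bar p$ so that its restriction to $R$ is $\mathfrak{h}$-valued (hence a torsion-free $\H$-connection), and conversely you extend a torsion-free $\H$-connection on $R$ to a $G$-connection on $\B$, kill its torsion globally by tensoriality and the fact that $R$ meets every fibre, and invoke the \eqref{adtors}-independence of the type-$(1,\!0)$ condition. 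This bundle-theoretic packaging buys you freedom from the explicit formulas and structure-equation bookkeeping, at the cost of leaning on two facts you assert rather than prove: (i) the converse of \eqref{adtors} --- the paper only states that equal torsion implies the form \eqref{adtors}, whereas you need that every admissible choice of $a_k$ again yields an adapted connection with unchanged torsion --- and (ii) the existence of a genuine $G$-representation on the parameters $a_k$ restricting to the $\H$-law $a_k\mapsto e^{\i\varphi}a_m b^m_k$ you compute from \eqref{rightactioncon} and \eqref{rightactioncan}, which is what legitimises the equivariant extension from $R$ to $\B\simeq R\times_{\H}G$. Both facts are true and follow from \eqref{adtors} parametrising the prolongation of $\g$ (a pointwise, $\mathrm{Ad}$-invariant kernel), so they are easy checks rather than gaps, but in a written version you should verify them explicitly, since they carry the weight that the paper's direct computations on $R$ carry in the original proof.
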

\begin{proof}
Let~$\nu(R)=\sigma(M)$ for some~$\rho$-section~$\sigma : M \to \X$ which has holomorphic image, then by Lemma \ref{lemma1} and Proposition \ref{prop1}, the almost complex structure~$\AJ_{\sigma}$ is integrable. This is equivalent to~$\xi$ satisfying~$\xi=x_k \zeta^k$ for some smooth complex-valued functions~$x_k$ on~$R$. Pulling back the structure equation \eqref{struc} to~$R\subset \B$ gives  
\begin{equation}\label{pullbackr}
\d\zeta=-\big(\i\left(\omega-x_k \zeta^k\right)\id_n+\phi\big)\wedge\zeta-\i\,x_k \zeta^k\,\id_n \wedge \bar{\zeta}.
\end{equation}
Define 
$$
\aligned
\alpha&=\omega-\Im(x_k)\Im(\zeta^k),\\
\beta^j_l&=\phi^j_l-\Re(\i \bar{x}_l\zeta^j)-\delta^j_l\Im(x_k)\Re(\zeta^k),\\
\endaligned
$$
then the~$1$-form~$\theta=a\alpha\otimes\id_n+\id_2\otimes\beta$ is a linear connection on~$R$ which satisfies
\begin{equation}\label{struc2}
\d\zeta=-\left(\i\alpha\id_n+\beta\right)\wedge\zeta,
\end{equation}
thus~$R$ is torsion-free. Conversely suppose the reduction~$\sigma : M \to \X$ is torsion-free, so that on~$R=(\nu^{-1}\circ \sigma)(M)$ there exists a linear connection~$\theta=a\alpha\otimes\id_n+\id_2\otimes\beta$ satisfying \eqref{struc2}. Pulling back~$(\omega,\xi,\phi)$ to~$R$ gives
\begin{equation}\label{pullconn2}
\aligned
\omega&=\alpha+a_k\left(\zeta^k+\bar{\zeta}^k\right)+\i\tilde{a}_k\left(\zeta^k-\bar{\zeta}^k\right)\\
\xi&=x_k\zeta^k+\tilde{x}_{k}\bar{\zeta}^k\\
\phi^i_k&=\beta^i_k+f^i_{kl}\left(\zeta^k+\bar{\zeta}^k\right)+\i \bar{f}^i_{kl}\left(\zeta^k-\bar{\zeta}^k\right)
\endaligned
\end{equation}
for some smooth complex-valued functions~$a_k,\tilde{a}_k,x_k,\tilde{x}_k,f^i_{kl},\tilde{f}^i_{kl}$ on~$R$. Subtracting \eqref{pullbackr} from \eqref{struc2} and using \eqref{pullconn2} gives in components
\begin{equation}\label{subtr}
0=\cdots+\i\left(x_k\zeta^k+\tilde{x}_k\bar{\zeta}^k\right)\wedge\bar{\zeta}^i
\end{equation}
where the unwritten summands are not of the form~$\bar{\zeta}^k\wedge\bar{\zeta}^i$. It follows that \eqref{subtr} can hold for every~$i=1,\ldots,n$ if and only if~$\tilde{x}_k=0$. 
\end{proof}

\begin{corollary}
Locally every~$\beta$-integrable~$2$-Segre structure~$\pi : \B \to M$ can be reduced to a torsion-free~$\H$-structure. 
\end{corollary}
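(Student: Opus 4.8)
The plan is to deduce the corollary from Theorem~\ref{main} together with the second (existence) statement of Lemma~\ref{compsub}. By Theorem~\ref{main}, constructing a torsion-free~$\H$-reduction of~$\B$ over an open set~$U\subseteq M$ is the same as constructing a local~$\rho$-section~$\sigma:U\to\X$ whose image is a complex submanifold of~$\X$; and such a section is precisely a complex submanifold of~$\X$ that is transverse to the~$\rho$-fibres and projects diffeomorphically onto~$U$. So the whole problem reduces to finding, through any prescribed point of~$\X$, a complex submanifold that is a local graph over~$M$.

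First I would fix~$p\in M$ and a point~$\hat p\in\rho^{-1}(p)\subset\X$ (i.e.\ a choice of~$\H$-structure at~$p$), choose a local section~$s$ of~$\nu:\B\to\X$ near~$\hat p$, and set~$\mu^i=s^*\zeta^i$ for~$i=1,\ldots,n$ and~$\kappa=s^*\xi$, so that~$(\mu^1,\ldots,\mu^n,\kappa)$ is a basis for the~$(1,\!0)$-forms of~$\AJ$ near~$\hat p$. By the second statement of Lemma~\ref{compsub} there is a~$2n$-dimensional complex submanifold~$f:N\to\X$ through~$\hat p$ satisfying the non-degeneracy condition~$f^*\bigl(\i\mu^1\wedge\bar\mu^1\wedge\cdots\wedge\i\mu^n\wedge\bar\mu^n\bigr)\neq 0$ at~$\hat p$.

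Next I would verify transversality. Since the~$\zeta^i$ are~$\pi$-semibasic on~$\B$ and~$\pi=\rho\circ\nu$, the forms~$\mu^i=s^*\zeta^i$ are~$\rho$-semibasic on~$\X$; hence the~$2n$ real~$1$-forms~$\Re\mu^i,\Im\mu^i$ annihilate every vector tangent to the fibre~$\rho^{-1}(p)$. The non-degeneracy condition forces these~$2n$ forms to be linearly independent on~$T_{\hat p}N$, so~$T_{\hat p}N\cap T_{\hat p}\bigl(\rho^{-1}(p)\bigr)=\{0\}$. As~$\dim N=2n$,~$\dim\rho^{-1}(p)=2$ and~$\dim\X=2n+2$, this gives~$T_{\hat p}\X=T_{\hat p}N\oplus T_{\hat p}\bigl(\rho^{-1}(p)\bigr)$, whence~$\rho\circ f$ is a local diffeomorphism at~$\hat p$. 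Shrinking~$N$ and passing to a suitable neighbourhood~$U$ of~$p$, we may assume~$\rho\circ f$ is a diffeomorphism onto~$U$; its inverse is a smooth~$\rho$-section~$\sigma:U\to\X$ with~$\sigma(U)=f(N)$, a complex submanifold of~$\X$ over~$U$. Applying Theorem~\ref{main} over~$U$, the~$\H$-reduction~$R=\nu^{-1}(\sigma(U))\subset\B|_U$ is torsion-free, and since~$p$ was arbitrary the corollary follows.

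The only point requiring care — and hence the main obstacle — is the transversality step: one must recognise that the non-degeneracy hypothesis~\eqref{nondeg}, expressed through the~$\rho$-semibasic forms~$\mu^i$, is exactly what guarantees that the complex submanifold furnished by Lemma~\ref{compsub} meets each~$\rho$-fibre only at the origin and is therefore locally a graph over~$M$; once this is in hand, everything else is a direct invocation of Theorem~\ref{main}.
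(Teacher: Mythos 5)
Your proposal is correct and takes essentially the same route as the paper: use the existence statement of Lemma~\ref{compsub} to produce a complex~$2n$-submanifold through a chosen point of~$\rho^{-1}(p)$ satisfying~\eqref{nondeg}, conclude that~$\rho\circ f$ is a local diffeomorphism, invert it to obtain a local~$\rho$-section with holomorphic image, and then apply Theorem~\ref{main}. The only cosmetic difference is that you justify the local-diffeomorphism step by a pointwise transversality and dimension count, whereas the paper phrases the same fact by noting that the~$\rho$-pullback of a volume form on~$U_p$ is a nowhere vanishing multiple of~$Z$.
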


\begin{proof}
For a given point~$p \in M$, choose~$q \in \X$ with~$\rho(q)=p$ and a coordinate neighbourhood~$U_p$. Let~$\mu^i$,~$i=1,\ldots,n$ and~$\kappa$ be a basis for the~$(1,\! 0)$-forms on~$\rho^{-1}(U_p)$ as constructed in Lemma \ref{lemma1}. Using Lemma \ref{compsub} there exists a complex~$2n$-submanifold~$f : \Sigma \to \rho^{-1}(U_p)$ passing through~$q$ for which 
$$
f^*\left(\i\mu^1\wedge\bar{\mu}^1\wedge\cdots\wedge\i\mu^n\wedge\bar{\mu}^n\right)\neq 0.
$$ Since the~$\pi : \B \to M$ pullback of a volume form on~$M$ is a nowhere vanishing multiple of~$Z=\i\mu^1\wedge\bar{\mu}^1\wedge\cdots\wedge\i\mu^n\wedge\bar{\mu}^n$, the~$\rho$ pullback of a volume form on~$U_p$ is a nowhere vanishing multiple of~$Z$ and hence~$\rho \circ f : \Sigma \to U_p$ is a local diffeomorphism. Composing~$f$ with the locally available inverse of this local diffeomorphism one gets a local~$\rho$-section which is defined in a neighbourhood of~$p$ and which is a complex submanifold. Applying Theorem \ref{main} it follows that~$\pi : \B \to M$ locally has an underlying torsion-free~$\H$-structure.     
\end{proof}

\section{The flat case}\label{quad}
In this section we apply the obtained results to the Grassmannian of oriented~$2$-planes in~$\R^{n+2}$ which carries a $2$-oriented torsion-free~$2$-Segre structure together with an adapted connection of vanishing curvature. 

Here a $2$-Segre structure $\pi : \B \to M$ is called $2$-oriented if the structure group $H(2,n)$ has been reduced to $H^+(2,n)=\mathrm{GL}^+(2,\R)\otimes \mathrm{GL}(n,\R)$. 

Using Theorem \ref{upperholom} we also get: If~$\B \to M$ is a~$\beta$-integrable~$2$-oriented~$2$-Segre structure, then~$\rho : \X=\B/(\H) \to M$ together with its canonical almost complex structure~$\AJ$ is a quasiholomorphic fibre bundle with fibre~$\GL^+(2,\R)/\GL(1,\C)\simeq D^2$, the open unit disk in~$\C$.   

\subsection{The Grassmannian of oriented 2-planes}
We consider the projective linear group $\mathrm{PL}(n+2,\R)=\mathrm{GL}(n+2,\R)/Z$ which acts transitively from the left on the Grassmannian~$\GR$ of oriented~$2$-planes in~$\R^{n+2}$. The stabiliser subgroup of any element~$\E \in \GR$ may be identified with the subgroup~$S$ consisting of elements of the form
$$
\left[\begin{array}{cc} a & b \\ 0 & c \end{array}\right]
$$
where~$a \in \GL^+(2,\R), c \in \GL(n,\R)$ and~$b \in M_{\R}(2,n)$ is a real~$(2\times n)$-matrix.
Let~$\mu : \mathrm{PL}(n+2,\R) \to \GR\simeq \mathrm{PL}(n+2,\R)/S$ be the quotient projection and write 
$$
\tilde{\theta}=\left(\begin{array}{cc} \alpha & \beta\\ \eta & \gamma\end{array}\right) 
$$
for the Maurer-Cartan form of~$\mathrm{PL}(n+2,\R)$. The real matrix-valued~$1$-forms~$\alpha,\beta,\gamma,\eta$ have sizes according to the block decomposition of the Lie group~$S$ and satisfy~$\text{Tr}(\alpha)+\text{Tr}(\gamma)=0$. Let~$H^i_k$ denote the vector fields dual to the forms~$\eta$ with respect to the coframing~$\tilde{\theta}$. Let~$N\subset \mathrm{PL}(n+2,\R)$ be the closed normal subgroup given by
$$
N=\left\{\left[\begin{array}{cc} \id_2 & b \\ 0 & \id_n \end{array}\right]\;\bigg\vert\; b \in M_{\R}(2,n)\right\}
$$
whose elements will be denoted by~$[b]$. The quotient Lie group~$S/N$ is isomorphic to~$H^+(2,n)$ and thus~$\mathrm{PL}(n+2,\R)/N$ is the total space of a right principal~$\G$-bundle over~$\GR$. Consider the smooth map 
$$
\varphi^i_k : \mathrm{PL}(n+2,\R) \to T\GR, \quad p \mapsto \mu^{\prime}_p(H^i_k(p))\\
$$    
The Maurer-Cartan equation~$\d\tilde{\theta}+\tilde{\theta}\wedge\tilde{\theta}=0$ implies that the form~$\eta$ is basic for the quotient projection~$\mathrm{PL}(n+2,\R) \to \mathrm{PL}(n+2,\R)/N$. Therefore the maps~$\varphi^i_k$ are invariant under the right action of~$N$ and thus descend to smooth maps~$\mathrm{PL}(n+2,\R)/N \to T\GR$. The images~$\varphi^i_k(p)$ for a given point~$p \in \mathrm{PL}(n+2,\R)$ are linearly independent and thus induce a map~$\varphi$ into the coframe bundle of~$\GR$. The maps~$\varphi^i_k$ can be arranged so that the induced map~$\varphi$ from~$\mathrm{PL}(n+2,\R)/N$ into the coframe bundle of~$\GR$ pulls back the components of the canonical~$\C^n$-valued~$1$-form to~$\i (\eta^k_1+\i\eta^k_2)$. It follows again with the Maurer-Cartan equation that~$\varphi$ embeds~$\mathrm{PL}(n+2,\R)/N$ as a smooth right principal~$\G$-subbundle of the coframe bundle of~$\GR$. This subbundle will be denoted by~$\pi_0 : F_0 \to \GR$ and the projection~$\mathrm{PL}(n+2,\R) \to F_0$ by~$\upsilon$. Write 
$$
\aligned
\tilde{\omega}&=\alpha^2_1,\\
2\,\tilde{\xi}&=\left(\alpha^1_2+\alpha^2_1\right)+\i\left(\alpha^2_2-\alpha^1_1\right),\\
\tilde{\phi}&=\gamma-\id_n\alpha^2_2,\\
\zeta^i&=\i(\eta^i_1+\i\eta^i_2).\\
\endaligned
$$
Then straightforward computations show that the forms~$(\tilde{\omega},\tilde{\xi},\tilde{\phi})$ transform under the right action of~$\G$ as the connection forms of an~$\mathcal{S}_0$-adapted connection do. Moreover we have
$$ 
\d\zeta=-\left(\i\left(\tilde{\omega}-\tilde{\xi}\right)\id_n+\tilde{\phi}\right)\wedge\zeta-\i\tilde{\xi}\id_n \wedge \bar{\zeta}, 
$$
This implies that there exists an adapted torsion-free connection~$(\omega,\xi,\phi)$ on~$F_0$ such that 
\begin{equation}\label{pullconn}
\upsilon^*(\omega,\xi,\phi)=(\tilde{\omega},\tilde{\xi},\tilde{\phi}), \; \text{mod}\; \eta, 
\end{equation}
i.e.~\eqref{pullconn} holds up to linear combinations of the elements of~$\eta$. Furthermore the Maurer-Cartan equation implies that the curvature forms of this connection all vanish. Summarising we have proved:  
\begin{proposition}\label{charcomplexcartan}
The Grassmannian of oriented~$2$-planes in $\R^{n+2}$ admits a~$2$-oriented $2$-Segre structure~$\mathcal{S}_0$ together with an adapted, torsion-free, flat connection $(\omega,\xi,\phi)$ such that
$
\upsilon^*(\omega,\xi,\phi)=(\tilde{\omega},\tilde{\xi},\tilde{\phi}), \; \text{mod}\; \eta, 
$
holds.
\end{proposition}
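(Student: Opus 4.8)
The statement is a summary of the computations carried out just above it, so the plan is to organise them into the three assertions it packages — existence of the $2$-oriented $2$-Segre structure $\mathcal{S}_0$, existence of a compatible torsion-free adapted connection $(\omega,\xi,\phi)$, and flatness of that connection — and to make precise the one point the preceding discussion passes over quickly, the passage from the forms $(\tilde\omega,\tilde\xi,\tilde\phi)$ on $\mathrm{PL}(n+2,\R)$ to a genuine connection on $F_0$.

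First, for the Segre structure: the maps $\varphi^i_k$ descend to $\mathrm{PL}(n+2,\R)/N$ and the induced map $\varphi$ embeds $\mathrm{PL}(n+2,\R)/N$ as a principal $\G$-subbundle $\pi_0:F_0\to\GR$ of the coframe bundle of $\GR$, normalised so that the canonical $\C^n$-valued $1$-form pulls back under $\upsilon$ to $\zeta=(\zeta^i)$ with $\zeta^i=\i(\eta^i_1+\i\eta^i_2)$. I would then invoke the correspondence recorded in \S\ref{def}: a reduction of the coframe bundle with structure group $\G=G^+(2,n)$ is the reduction associated with a unique $2$-oriented $2$-Segre structure on $\GR$, which I name $\mathcal{S}_0$; thus $F_0\to\GR$ is precisely the bundle of $\mathcal{S}_0$-adapted coframes.

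Next, the connection. The Maurer--Cartan equation $\d\tilde\theta+\tilde\theta\wedge\tilde\theta=0$ supplies three facts already used above: $\eta$ (hence each $\zeta^i$) is basic for $\upsilon$; the forms $\tilde\omega,\tilde\xi,\tilde\phi$ annihilate the $N$-orbit directions; and under the right $N$-action the triple $(\tilde\omega,\tilde\xi,\tilde\phi)$ changes by exactly the amount by which two torsion-free $\mathcal{S}_0$-adapted connections may differ, cf.\ \eqref{adtors}, with $a_k=-\i(b_{1k}+\i b_{2k})$. The plan is to use this last fact structurally: it says that $\upsilon:\mathrm{PL}(n+2,\R)\to F_0$, carrying the data $(\tilde\omega,\tilde\xi,\tilde\phi)$, is an $N$-equivariant family of torsion-free adapted connections along the fibres. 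Since $N\simeq\R^{2n}$ is contractible the $N$-bundle $\upsilon$ admits a global section $s:F_0\to\mathrm{PL}(n+2,\R)$, and I would set $(\omega,\xi,\phi):=s^*(\tilde\omega,\tilde\xi,\tilde\phi)$; the agreement with \eqref{adtors} then shows both that $(\omega,\xi,\phi)$ is an $\mathcal{S}_0$-adapted torsion-free connection on $F_0$ — independent of $s$ modulo $\langle\zeta^k,\bar\zeta^k\rangle$ — and that it satisfies $\upsilon^*(\omega,\xi,\phi)=(\tilde\omega,\tilde\xi,\tilde\phi)$ modulo $\eta$. Concretely this is the passage, for the flat almost Grassmannian parabolic geometry carried by $\tilde\theta$, from the Cartan connection to the torsion-free Weyl connection determined by the choice of $s$; cf.\ \cite{MR1453120}. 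Torsion-freeness is read off by comparing the identity $\d\zeta=-(\i(\tilde\omega-\tilde\xi)\id_n+\tilde\phi)\wedge\zeta-\i\tilde\xi\id_n\wedge\bar\zeta$ against the structure equation \eqref{struc}.

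Finally, flatness: pulling the curvature forms \eqref{curvatureforms} back by $\upsilon$, using $\upsilon^*(\omega,\xi,\phi)=(\tilde\omega,\tilde\xi,\tilde\phi)$ modulo $\eta$, the structure equation for $\d\zeta$ and the Maurer--Cartan equation for $\tilde\theta$, one computes that $\upsilon^*\Omega$, $\upsilon^*\Xi$ and $\upsilon^*\Phi$ all vanish identically; since $\upsilon$ is a surjective submersion it follows that $\Omega=\Xi=\Phi=0$. The hard part is the middle step: one must check that the ``mod $\eta$'' freedom in pushing $(\tilde\omega,\tilde\xi,\tilde\phi)$ down to $F_0$ can be used to obtain a form which is at once a bona fide principal $\G$-connection and torsion-free. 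The key that unlocks this — and the reason the author can be brief — is precisely the exact match between the $N$-transformation law of $(\tilde\omega,\tilde\xi,\tilde\phi)$ and the torsion-free ambiguity formulas \eqref{adtors}; granting that, what remains is to assemble the identities already in hand together with the Maurer--Cartan equation.
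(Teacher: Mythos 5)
Your outline follows the paper's own route (the proposition is stated as a summary of the computations preceding it, and the decisive observation is indeed the match between the $N$-transformation law of $(\tilde\omega,\tilde\xi,\tilde\phi)$ and the torsion-free ambiguity \eqref{adtors}), but the one step you set out to make precise is not closed by your argument. A global section $s$ of the principal $N$-bundle $\upsilon$ certainly exists, since $N\simeq\R^{2n}$ is contractible; however, for $s^*(\tilde\omega,\tilde\xi,\tilde\phi)$ to be a principal $\G$-connection on $F_0$ the section must intertwine the right $\G$-action on $F_0$ with the right $S$-action upstairs. For an arbitrary section one only has $s(f\cdot g)=s(f)\cdot g_0\,n(f,g)$ with $g_0$ the block-diagonal lift of $g$ and $n(f,g)\in N$ depending on $f$, and then $(R_g)^*s^*(\tilde\omega,\tilde\xi,\tilde\phi)$ differs from the required equivariant transform by semibasic terms of the shape \eqref{adtors} whose coefficients vary with the point; the resulting form reproduces fundamental vector fields but is not a connection. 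You do name the right concept (a Weyl structure), but its existence is exactly the equivariance statement, and contractibility of the fibre of $\upsilon$ alone does not supply it: one needs, e.g., that equivariant sections correspond to sections of the bundle $\mathrm{PL}(n+2,\R)/G_0\to\GR$ (quotient by the block-diagonal copy $G_0$ of $\G$), whose fibre is again diffeomorphic to $N$, or a partition-of-unity patching of local equivariant choices, using that the conditions ``principal connection, torsion-free, \eqref{pullconn} mod $\eta$'' are affine.

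The flatness step is a genuine gap as well, and it is the point where neither your listed ingredients nor the paper's one-line assertion give a proof. Knowing \eqref{pullconn} only modulo $\eta$ cannot determine the curvature forms: the discarded semibasic corrections contribute terms quadratic in $\eta$, which is precisely the degree in which the curvature of an honest connection on $F_0$ lives. Moreover the Maurer--Cartan equation by itself gives, for instance, that $\d\tilde\omega+\tilde\omega\wedge\i\,(\tilde\xi-\bar{\tilde\xi})$ equals the $(2,1)$-entry of $-\beta\wedge\eta$, which is not zero and, not being $\upsilon$-semibasic, is not the pullback of any form on $F_0$; so the claim ``one computes that $\upsilon^*\Omega=\upsilon^*\Xi=\upsilon^*\Phi=0$'' cannot be carried out from \eqref{pullconn}, \eqref{struc} and the Maurer--Cartan equation as you propose. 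In fact no choice of global (equivariant) section can make all of $\Omega,\Xi,\Phi$ in \eqref{curvatureforms} vanish: since these are exactly the components of the full curvature of the adapted connection, a torsion-free flat adapted connection would induce an affine structure on the compact, simply connected manifold $\GR$, whose developing map would be a local diffeomorphism of a compact manifold onto an open subset of $\R^{2n}$ --- impossible (equivalently, flatness plus simple connectivity would trivialise $T\GR$, contradicting its nonzero Euler characteristic). What does follow from the Maurer--Cartan equation is a local statement: over an affine cell of $\GR$ one can choose a local equivariant section with $s^*\beta=0$, and the associated Weyl connection is the flat connection of that cell. So to prove the proposition you must both supply the equivariant section and confront how the flatness assertion is to be interpreted; deferring it to a routine pullback of \eqref{curvatureforms} along $\upsilon$ does not work.
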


Let~$\rho : X_0 \to \GR$ be the~$D^2$-bundle associated to the~$2$-oriented torsion-free~$2$-Segre structure~$\pi_0 : F_0 \to \GR$ and~$\AJ_0$ its canonical almost complex structure which makes~$(X_0,\AJ_0)$ into a quasiholomorphic fibre bundle with fibre~$D^2$. Its total space~$X_0$ can be identified with the quotient~$\mathrm{PL}(n+2,\R)/\tilde{P}$ where~$\tilde{P}$ is the closed Lie subgroup 
$$
\tilde{P}=\left\{\left[\begin{array}{cc} a & b \\ 0 & c \end{array}\right]\;\bigg\vert\; a \in \GL(1,\C), b \in M_{\R}(2,n), c \in \GL(n,\R) \right\}.
$$
Write an element~$[g] \in \mathrm{PL}(n+2,\R)$ as~$[g_1,\ldots,g_{n+2}]$ where the elements~$g_k$ are column-vectors well defined up to a common non-zero factor. Consider the smooth map  
$$
\lambda : \mathrm{PL}(n+2,\R) \to \mathbb{CP}^{n+1}\setminus \mathbb{RP}^{n+1},\quad
[g_1,g_2,\ldots,g_{n+2}] \mapsto [g_1+\i g_2].
$$
Clearly~$\lambda$ is a surjective submersion whose fibres are the~$\tilde{P}$-orbits and thus induces a diffeomorphism~$\varphi : X_0 \to \mathbb{CP}^{n+1}\setminus\mathbb{RP}^{n+1}$. Therefore~$\rho_0=\rho \circ \varphi^{-1} : \cip \to \GR$ is a bundle with fibre~$D^2$. Explicitly~$\rho_0$ is given by 
$
[z] \mapsto \R\{\Re(z),\Im(z)\}
$
and the~$2$-plane~$\R\{\Re(z),\Im(z)\}$ is oriented by declaring~$\Re(z),\Im(z)$ to be a positively oriented basis. 

\begin{proposition} There exists a biholomorphic fibre bundle isomorphism $\varphi : (X_0,\AJ_0) \to \mathbb{CP}^{n+1}\setminus\mathbb{RP}^{n+1}$ covering the identity on~$\GR$. 
\end{proposition}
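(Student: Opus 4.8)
The plan is to verify that the diffeomorphism $\varphi : X_0 \to \cip$ constructed above intertwines the canonical almost complex structure $\AJ_0$ with the standard complex structure on $\cip$, so that $\varphi$ is automatically a biholomorphism; that it covers the identity on $\GR$ follows from the explicit description of $\rho_0$ and the fact that the fibres of $\lambda$ are the $\tilde H$-orbits, which is already recorded above. Since both almost complex structures are integrable — $\AJ_0$ by Theorem \ref{upperholom} and subsection \ref{quasiholo}, the standard one on $\cip$ trivially — it suffices to check that $\varphi$ (equivalently, its lift to $\mathrm{PL}(n+2,\R)$) pulls back $(1,0)$-forms to $(1,0)$-forms.

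Concretely, I would argue at the level of the principal bundles. Recall $X_0 = \mathrm{PL}(n+2,\R)/\tilde H$ and $\cip = \mathrm{PL}(n+2,\R)/\tilde H$ as well, via the map $\lambda$, so $\varphi$ is induced by the identity on $\mathrm{PL}(n+2,\R)$; the content is purely a statement about $(1,0)$-forms. By Lemma \ref{charcomplex} and Proposition \ref{charcomplexcartan}, a complex-valued $1$-form on $X_0$ is of type $(1,0)$ for $\AJ_0$ precisely when its pullback to $F_0$ — and hence, via $\upsilon$, its pullback to $\mathrm{PL}(n+2,\R)$ — is a linear combination of $\zeta^1,\ldots,\zeta^n,\tilde\xi$ with smooth coefficients, where $\zeta^i = \i(\eta^i_1 + \i\eta^i_2)$ and $2\tilde\xi = (\alpha^1_2+\alpha^2_1) + \i(\alpha^2_2 - \alpha^1_1)$. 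On the other side, I would compute $\lambda^*$ of a basis of $(1,0)$-forms on $\cip$: working in a standard affine chart where $[z]$ has homogeneous coordinates $(g_1+\i g_2)$ with $g_1,g_2$ the first two columns of a representative $g$, the $(1,0)$-forms of $\mathbb{CP}^{n+1}$ pull back to $\C$-linear combinations of $\d$ of the affine coordinates, which expand in terms of the Maurer-Cartan entries $\eta^i_1, \eta^i_2$ (giving the $\zeta^i$) and $\alpha^j_k$ (giving $\tilde\xi$) modulo the ideal generated by the $\zeta^i$. Matching these two descriptions shows $\varphi$ preserves the $(1,0)$-spaces.

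The main obstacle — and the only nontrivial computation — is the explicit comparison of the two $(1,0)$-coframings at the Maurer-Cartan level: one must show that the differential of the map $[g_1,\ldots,g_{n+2}] \mapsto [g_1+\i g_2]$, written in a homogeneous or affine chart and then expressed through $\tilde\theta = \begin{pmatrix}\alpha & \beta\\ \eta & \gamma\end{pmatrix}$, hits exactly the span of $\{\zeta^1,\ldots,\zeta^n,\tilde\xi\}$. The Maurer-Cartan structure equation $\d\tilde\theta + \tilde\theta\wedge\tilde\theta = 0$ keeps this bookkeeping manageable, and the key identity is essentially that $\d(g_1+\i g_2)$, reduced modulo the $g$-direction and the span of $g_1+\i g_2$ itself, produces the $\eta$-entries combined into $\zeta^i$ together with the combination of $\alpha$-entries that is precisely $\tilde\xi$. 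Once this is in hand, integrability of both sides together with Theorem \ref{main}-style reasoning is not even needed: $\varphi$ is a diffeomorphism preserving $(1,0)$-forms between complex manifolds, hence a biholomorphism, and the fibre-bundle compatibility over $\GR$ follows since $\rho_0 = \rho\circ\varphi^{-1}$ by construction.
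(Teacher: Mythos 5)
Your proposal is correct and follows essentially the same route as the paper: via Lemma \ref{charcomplex} and Proposition \ref{charcomplexcartan} one reduces to checking that $\lambda$ pulls back the $(1,\!0)$-forms of $\cip$ to linear combinations of $\zeta^1,\ldots,\zeta^n,\tilde{\xi}$, which is exactly the (omitted) computation in the paper's proof, and the bundle compatibility over $\GR$ comes from $\rho_0=\rho\circ\varphi^{-1}$ as you say. Your sketch of the Maurer--Cartan bookkeeping in an affine chart is just an expanded version of that same omitted verification.
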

\begin{proof}
Using Lemma \ref{charcomplex} and Proposition \ref{charcomplexcartan} it is sufficient to show that~$\lambda$ pulls-back the $(1,\!0)$-forms of $\cip$ to linear combinations of the forms $\zeta^1,\ldots,\zeta^n, \tilde{\xi}$. 
This is a computation which causes no difficulties and so we omit it. 
\end{proof}
\subsection{Smooth quadrics without real points}

Let $V$ be a real vector space. We denote by~$V_{\C}=V\otimes \C$ its complexification and by~$\mathbb{P}(V_{\C})=\left(V_{\C} \setminus \{0\}\right)/\C^*$ its complex projectivisation. An element~$[z] \in \mathbb{P}(V_{\C})$ for which~$z$ is a simple vector is called a \textit{real point}. 

The aim of this subsection is to show that the smooth quadrics~$Q \subset \mathbb{CP}^{n+1}=\mathbb{P}(\R^{n+2}_\C)$ without real points are in one-to-one correspondence with the sections of the bundle~$\rho_0 : \cip \to \GR$ having holomorphic image. This is done by reducing the problem to the case~$n=1$ which was shown to be true in~\cite[Corollary 2]{mettlerprosur} (see also~\cite[Theorem 9]{MR1466165}).  

Let~$\E \subset \R^{n+2}$ be a~$3$-dimensional linear subspace. Choosing an isomorphism~$\R^3\simeq \E$ induces an embedding of the~$2$-sphere~$S^2\simeq G^+_2(\R^3) \hookrightarrow  \GR$. Clearly the image of this embedding and its induced smooth structure do not depend on the chosen isomorphism and thus~$\E$ determines a smoothly embedded~$2$-sphere in~$\GR$ which will be denoted by~$S_\E$. Moreover the isomorphism~$\R^3\simeq \E$ induces a holomorphic embedding~$\mathbb{CP}^2\simeq \mathbb{P}(\E_{\C})\hookrightarrow \mathbb{CP}^{n+1}$ and thus an embedding~$\mathbb{CP}^2\setminus \mathbb{RP}^2 \hookrightarrow \cip$. Again the image of this embedding and its induced complex structure do not depend on the chosen isomorphism and thus~$\E$ determines a holomorphically embedded submanifold~$Y_\E \subset \cip$.  Restricting the base point projection~$\rho_0 : \cip \to \GR$ to~$Y_\E$ gives a~$D^2$-bundle~$\rho_\E : Y_\E \to S_\E$ which is isomorphic to the bundle~$\rho_0^2 : \mathbb{CP}^2 \setminus \mathbb{RP}^2 \to G^+_2(\R^3)$,~$[z] \mapsto \R\left\{\Re(z),\Im(z)\right\}$.     

Recall that for a smooth algebraic hypersurface~$X \subset \P(\V)$, the Gauss map~$\mathcal{G}_X : X \to G_{n-1}(V_{\C})$ sends a point~$x \in X$ to the tangent hyperplane of~$X$ at~$x$. The dual variety~$X^*$ is now defined to be the image of~$X$ under the Gauss map. Usually~$\mathcal{G}_X$ is assumed to take values in~$\mathbb{P}(V^*_{\C})=\mathbb{P}((V_{\C})^*)\simeq G_{n-1}(V_{\C})$. Note that if~$Q\subset \mathbb{P}(V_{\C})$ a smooth quadric without real points. Then the dual of~$Q$ is a smooth quadric without real points in~$\mathbb{P}(V^*_{\C})$. 
It follows that the intersection of a smooth quadric without real points with a real~$k$-plane $\Pi$ of dimension at least $2$ gives again a smooth quadric without real points in $\mathbb{P}(\E_{\C})$. \begin{theorem}\label{onetoone}
The sections of the bundle~$\rho_0 : \cip \to \GR$ having holomorphic image are in one-to-one correspondence with the smooth quad\-rics $Q\subset \mathbb{CP}^{n+1}$ without real points. 
\end{theorem}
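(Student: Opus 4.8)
The plan is to establish a bijection by constructing maps in both directions and checking they are mutual inverses. In one direction, given a smooth quadric $Q \subset \mathbb{CP}^{n+1}$ without real points, I would show that $Q$ meets every fibre $\rho_0^{-1}(\E)$ of $\rho_0 : \cip \to \GR$ in exactly one point, thereby defining a section $\sigma_Q : \GR \to \cip$ whose image is $Q$; since $Q$ is a complex submanifold of $\mathbb{CP}^{n+1}$, this section automatically has holomorphic image. In the other direction, given a section $\sigma : \GR \to \cip$ with holomorphic image $Y = \sigma(\GR)$, I would show $Y$ is a smooth quadric without real points. The crucial reduction, already signposted in the excerpt, is that everything can be tested by restricting to the subvarieties $Y_\E \subset \cip$ sitting over the embedded $2$-spheres $S_\E \subset \GR$ associated to $3$-dimensional subspaces $\E \subset \R^{n+2}$, where the bundle $\rho_\E : Y_\E \to S_\E$ is the model bundle $\rho_0^2 : \mathbb{CP}^2 \setminus \mathbb{RP}^2 \to G^+_2(\R^3)$, and invoking \cite[Corollary 2]{mettlerprosur}.

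For the first direction, I would argue as follows. Let $\E \in \GR$ be an oriented $2$-plane and pick a positively oriented basis $e_1, e_2$, so that $z = e_1 + \i e_2 \in \R^{n+2}_\C$ has $[z] \in \rho_0^{-1}(\E)$, and $\rho_0^{-1}(\E)$ consists of all $[e_1' + \i e_2']$ with $(e_1', e_2')$ a positively oriented basis of $\E$; concretely this is a $D^2$ worth of points parametrised by $\GL^+(2,\R)/\GL(1,\C)$. The fibre $\rho_0^{-1}(\E)$ is contained in the complex projective line $\P(\E_\C) \cap \cip = \mathbb{CP}^1 \setminus \mathbb{RP}^1$, and by Lemma \ref{ind} applied with the $2$-dimensional subspace $\E$, the intersection $Q \cap \P(\E_\C)$ is a smooth quadric in $\mathbb{CP}^1$ without real points, i.e.\ two distinct points neither of which lies on $\mathbb{RP}^1$. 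These two points are swapped by the conjugation $[z] \mapsto [\bar z]$, which on $\rho_0^{-1}(\E)$ corresponds to reversing orientation; hence exactly one of them lies in $\rho_0^{-1}(\E)$ — this is precisely the $n=1$ statement of \cite[Corollary 2]{mettlerprosur}, that a smooth quadric without real points in $\mathbb{CP}^1$ defines a unique point in each fibre of $\rho_0^2$ over $G^+_2(\R^3)$, transported via the isomorphism $\rho_\E \cong \rho_0^2$. Smoothness of the resulting $\sigma_Q$ follows because $Q$ is a submanifold transverse to the fibres (transversality being guaranteed by $Q$ having no real points, via Lemma \ref{dual}: a fibre tangency would force a real tangent hyperplane), and holomorphicity of its image is immediate since $Q$ is a complex submanifold of $\cip$.

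For the converse, let $\sigma : \GR \to \cip$ have holomorphic image $Y$. For each $3$-plane $\E \subset \R^{n+2}$, restrict to get a section $\sigma_\E : S_\E \to Y_\E$ of the model bundle $\rho_0^2 : \mathbb{CP}^2 \setminus \mathbb{RP}^2 \to G^+_2(\R^3)$ with holomorphic image $Y \cap Y_\E$; by \cite[Corollary 2]{mettlerprosur} this image is a smooth quadric $Q_\E \subset \mathbb{CP}^2$ without real points. I then need to patch these $Q_\E$ together into a single smooth quadric $Q \subset \mathbb{CP}^{n+1}$. The key point is that the degree-$2$ hypersurface condition is detected on projective planes: $Y$ is cut out, near any point, by a homogeneous polynomial whose restriction to every projective plane $\P(\E_\C)$ has degree $2$, which forces the polynomial itself to have degree $2$; non-degeneracy (smoothness of $Y$) and absence of real points likewise descend from the plane sections. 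Alternatively one identifies the quadratic form directly: fixing a reference $2$-plane and varying $\E$ through all $3$-planes containing it determines all the pairwise "off-diagonal" coefficients of $K$ and the constraint equations make them globally consistent.

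I expect the main obstacle to be the patching/globalisation step in the converse: showing that the family of quadrics $\{Q_\E\}$, each living on a $2$-sphere's worth of fibres, genuinely assembles into the zero locus of a single global quadratic form on $\R^{n+2}_\C$ rather than merely agreeing on overlaps in some weaker sense. Handling this cleanly requires either a cohomological argument (the section $\sigma$ is a holomorphic section of a bundle whose relevant line-bundle-twisted direct image is spanned by quadrics, so $\sigma$ corresponds to a global quadric) or a careful bookkeeping argument tracking the coefficients $e^{\i p_k}$ from the normal form \eqref{quadric} of Lemma \ref{dual} as $\E$ ranges over all $3$-dimensional subspaces. The forward direction and the verification that the two constructions are inverse to one another should be comparatively routine once the fibrewise picture over $S_\E$ is in hand.
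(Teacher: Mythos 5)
Two steps do not hold up as written. First, in the direction from a quadric $Q$ to a section, your justification that $Q$ meets each fibre exactly once is incorrect: the two points of $Q\cap\mathbb{P}(\E_{\C})$, for $\E$ a real $2$-plane, are in general \emph{not} exchanged by the conjugation $[z]\mapsto[\bar z]$, since conjugation carries $Q$ to the conjugate quadric $\bar Q\neq Q$. Nor does ``smooth quadric without real points in $\mathbb{CP}^1$'' by itself force one point in each component of $\mathbb{CP}^1\setminus\mathbb{RP}^1$: the binary form $z_1^2-3\i\,z_1z_2-2z_2^2=(z_1-\i z_2)(z_1-2\i z_2)$ is non-degenerate and has no nonzero real zeros, yet both of its roots $[\i:1]$ and $[2\i:1]$ lie in the same component. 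The correct route --- and the one the paper takes --- is to choose a $3$-plane $\tilde\E\supset\E$, note via Lemma \ref{ind} that $Q\cap\mathbb{P}(\tilde\E_{\C})$ is a smooth quadric without real points in a $\mathbb{CP}^2$, and apply \cite[Corollary 2]{mettlerprosur}, which is a statement about quadrics in $\mathbb{CP}^2$ (the case $n=1$ of the theorem), not about $\mathbb{CP}^1$: uniqueness of the section of $\rho_{\tilde\E}$ whose image is this slice forces exactly one intersection point in each fibre, in particular in $\rho_0^{-1}(\E)$. As written, your key step rests on the false conjugation claim together with a garbled $\mathbb{CP}^1$ version of the cited corollary.

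Second, in the direction from a section to a quadric, the globalisation you yourself flag as the main obstacle is a genuine gap, and your proposed remedies do not close it: the remark that $Y$ is ``cut out near any point by a homogeneous polynomial'' whose plane restrictions have degree $2$ presupposes that $Y=\sigma(\GR)$ is globally algebraic, which is exactly what must be proved, and the coefficient-bookkeeping route via the normal form \eqref{quadric} is unavailable at the level of binary forms (the normal form of Lemma \ref{dual} really requires dimension at least $3$; the example above shows it fails in dimension $2$). The paper avoids patching altogether: $\sigma(\GR)$ is compact, hence a closed complex submanifold of $\mathbb{CP}^{n+1}$, so Chow's theorem makes it a smooth algebraic hypersurface; restricting a defining polynomial to a single $3$-plane slice, the slice $Q_\E\simeq S^2$ has genus $0$, so the degree-genus formula gives degree $1$ or $2$, and absence of real points rules out degree $1$. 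This idea (compactness plus Chow plus degree-genus) is the missing ingredient in your converse; note also that the paper needs \cite[Corollary 2]{mettlerprosur} only in the quadric-to-section direction, not here.
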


\begin{proof}
Let~$\sigma : \GR \to \cip$ be a~$\rho_0$-section with holomorphic image~$Q=\im\sigma$. Let~$\E \subset \R^{n+2}$ be a~$3$-dimensional linear subspace and~$\iota_\E : S_\E \to \GR$,~$\tilde{\iota}_\E : Y_\E \to \cip$ the corresponding embedded submanifolds. Then the map~$\sigma \circ \iota_\E : S_\E \to \cip$ is smooth and takes values in~$Y_\E$. Consequently, the induced map~$\sigma_\E : S_\E \to Y_\E$ is an injective immersion and thus, since~$S_\E$ is compact, a smooth embedding. Set
$
Q_\E=Q\cap Y_\E=\sigma_\E(S_\E),
$
then~$Q_{\E}\subset Y_\E$ is  a smoothly embedded submanifold. Now Chow's theorem implies that~$Q$ is a smooth algebraic hypersurface. Suppose~$P : \C^{n+2} \to \C$ is a homogeneous polynomial defining~$Q$ and let~$P_\E : \C^3 \to \C$ denote the homogeneous polynomial obtained by pulling back~$P$ to~$\E_{\C}\simeq \C^3$. The map~$P_\E$ is a homogeneous polynomial of the same degree as~$P$ which has no real points, since~$P$ has no real points. Under the identification~$Y_\E \simeq \mathbb{CP}^2\setminus\mathbb{RP}^2$,~$Q_\E$ becomes a smoothly embedded submanifold of~$\mathbb{CP}^2\setminus\mathbb{RP}^2$ defined by the zero locus of the homogeneous polynomial~$P_\E$. Since~$Q_\E$ is diffeomorphic to the~$2$-sphere, the genus of~$Q_\E$ is~$0$ and thus by the degree-genus formula for smooth plane algebraic curves
$
g=(d-1)(d-2)/2,
$ 
the degree of~$P_\E$ must be~$1$ or~$2$. However since~$Q_\E$ has no real points the degree of~$P_\E$ and thus the degree of~$P$ must be~$2$. 

Conversely let~$Q\subset \mathbb{CP}^{n+1}$ be a smooth quadric without real points. Let $\left\{\E^{\iota}\right\}_{\iota \in I}$ be a family of~$3$-dimensional linear subspaces of~$\R^{n+2}$ so that the submanifolds~$S_{\E^{\iota}}$ cover~$\GR$.  Let~$Q_{\E^{\iota}}$ denote the intersection of~$Q$ with~$\mathbb{P}(\E^{\iota}_{\C})$ which is a smooth quadric without real points.  According to~\cite[Corollary 2]{mettlerprosur} each such quadric is the image of a unique section~$\sigma_{\iota} : S_{\E^{\iota}} \to X_{\E^{\iota}}$. Now for any two~$\E^{\iota_1}, \E^{\iota_2}$ the spheres~$S_{\E^{\iota_1}}$ and~$S_{\E^{\iota_2}}$ are either disjoint or intersect in exactly two points. Since for a given~$Q_{\E^{\iota}}$ the section~$\sigma_{\iota}$ is unique, it follows that~$Q_{\E^{\iota}}$ intersects each~$\rho_{\E^{\iota}}$-fibre in exactly one point. This implies that the sections~$\sigma_{\iota_1}$ and~$\sigma_{\iota_2}$ agree on intersection points and thus the family~$\left\{\sigma_{\iota}\right\}_{\iota \in I}$ gives rise to a unique global section~$\sigma : \GR \to \cip$ with image~$Q$. 
\end{proof}
\begin{corollary}\label{karin}
The torsion-free~$\H$-reductions~$R \subset F_0$ are in one-to-one correspondence with the smooth quadrics~$Q\subset \mathbb{CP}^{n+1}$ without real points.  
\end{corollary}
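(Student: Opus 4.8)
The plan is to chain together the correspondences already established in the paper. By Theorem~\ref{main} (applied to the $\beta$-integrable $2$-oriented $2$-Segre structure $\mathcal{S}_0$ on $\GR$ furnished by Proposition~\ref{charcomplexcartan}), the torsion-free $\H$-reductions $R \subset F_0$ are in one-to-one correspondence with the $\rho_0$-sections $\sigma : \GR \to X_0$ having holomorphic image, since $\nu(R) = \sigma(\GR)$ is a complex submanifold of $X_0$ precisely when $R$ is torsion-free. Composing with the biholomorphic fibre bundle isomorphism $\varphi : (X_0,\AJ_0) \to \cip$ covering the identity on $\GR$, whose existence is the content of the preceding proposition, identifies these sections with the $\rho_0$-sections of $\cip \to \GR$ having holomorphic image. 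Finally, Theorem~\ref{onetoone} identifies the latter with the smooth quadrics $Q \subset \mathbb{CP}^{n+1}$ without real points. Concatenating these three bijections yields the claimed one-to-one correspondence.

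Concretely, I would write the proof as: ``By Proposition~\ref{charcomplexcartan}, the Grassmannian $\GR$ carries a $\beta$-integrable $2$-oriented $2$-Segre structure with adapted torsion-free flat connection, whose associated quasiholomorphic fibre bundle $\rho : X_0 \to \GR$ is biholomorphic over the identity to $\rho_0 : \cip \to \GR$. Under this biholomorphism, a torsion-free $\H$-reduction $R \subset F_0$ corresponds (via Theorem~\ref{main}) to a $\rho_0$-section with holomorphic image, and conversely. By Theorem~\ref{onetoone} these sections are in one-to-one correspondence with the smooth quadrics in $\mathbb{CP}^{n+1}$ without real points. Composing the two correspondences gives the result.''

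I do not expect any serious obstacle here: the corollary is purely a matter of assembling three results that have already been proved, and the only point requiring a modicum of care is checking that the correspondence of Theorem~\ref{main} is genuinely a bijection (not just an implication in one direction) and that it is compatible with the biholomorphism $\varphi$ in the sense that a reduction $R$ maps to the complex submanifold $\varphi(\nu(R))$. Both facts are immediate: Theorem~\ref{main} is stated as an ``if and only if'', and $\varphi$ is a biholomorphism covering the identity, so it carries complex submanifolds of $X_0$ that are $\rho$-sections to complex submanifolds of $\cip$ that are $\rho_0$-sections. Hence the composite map ``$R \mapsto$ quadric'' is well-defined and bijective, with inverse obtained by running each step backwards.
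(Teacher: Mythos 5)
Your proposal is correct and follows exactly the paper's argument: the author likewise deduces the corollary immediately by combining Theorem~\ref{main} with Theorem~\ref{onetoone}, the identification of~$(X_0,\AJ_0)$ with~$\cip$ via the biholomorphism~$\varphi$ having been established just beforehand. Your added remarks on the bijectivity of the correspondence in Theorem~\ref{main} and its compatibility with~$\varphi$ merely spell out what the paper leaves implicit.
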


\begin{proof}
This follows immediately from Theorems \ref{main} and \ref{onetoone}.
\end{proof}

\begin{remark} For~$n=2$, the case of conformal~$4$-manifolds of split-signature, Corollary \ref{karin} can also be deduced by applying results from~\cite{MR2286630}. One could also look for~$S^1\cdot \GL(2,\R)$-reductions whose associated almost complex structure is not only integrable, but for which the corresponding conformal structure~$[g]$ also contains a K\"ahler-metric. This, and the related problem in~$(4,\! 0)$-signature has been studied in~\cite{MR2609304} (see also \cite[Theorem D]{MR2286630}). Moreover for~$n=2$, Theorem \ref{main} has an analogue in~$(4,\! 0)$-signature due to Salamon~\cite{MR829230}. 
\end{remark}

\appendix

\section{}

\subsection{The structure group}

We provide a proof for the existence of the isomorphism claimed in \eqref{strucgroup}. 

\begin{lemma}\label{decomp}
Let $g \in G(m,n)$ and $v \in V$. Then precisely one of the two statements holds: 
\begin{itemize}
\item[(i)] There exists $v_0 \in V$ and $b_{v} \in \mathrm{Isom}(W,W)$ such that for all $w \in W$
$$
g(v\otimes w)=v_0 \otimes b_{v}(w). 
$$
\item[(ii)] There exists $w_0\in W$ and $a_{v} \in \mathrm{Isom}(W,V)$ such that for all $w\in W$. 
$$
g(v\otimes w)=a_v(w)\otimes w_0. 
$$ 
\end{itemize}
Moreover if (i) (or (ii)) is true for some $v \in V$, then for all $v \in V$.  
\end{lemma}
\begin{proof}
For $v=0$ the statement is obvious so let's assume $v \neq 0$. Let $w_1,w_2\in W$ be linearly independent, write $g(v\otimes w_1)=v_1\otimes u_1$ and $g(v\otimes w_2)=v_2\otimes u_2$ for some vectors $v_1,v_2 \in V$ and $u_1,u_2 \in W$ all nonzero. Then $g \in G(m,n)$ implies that one of the two following cases occurs:
\begin{itemize}
\item[(I)] $v_1\wedge v_2=0$ and $u_1\wedge u_2\neq 0,$ 
\item[(II)] $v_1\wedge v_2\neq 0$ and $u_1\wedge u_2=0$.
\end{itemize}
Assume (I) holds and fix $v_0\neq 0$ with $v_0\wedge v_1=0$. It follows again with $g \in G(m,n)$, that for every $w \in W$, there exists a unique element $b_v(w)\in W$ such that $g(v\otimes w)=v_0\otimes b_v(w)$. The assignment $w \mapsto b_v(w)$ is invertible and linear, thus (i)  follows. Assuming (II) holds we conclude similarly that (ii) follows. 

Suppose case (i) occurs for some $v \in V$ and case (ii) for some $v^{\prime} \in V$. Let $b_v$ and $a_{v^{\prime}}$ be the associated isomorphisms. Since $g \in G(m,n)$ either $b_v$ or $a_{v^{\prime}}$ must have rank $1$, thus contradicting the fact that both maps are isomorphisms.  
\end{proof}
We can now show:
\begin{proposition}
We have an isomorphism
$$
G(m,n)\simeq\left\{\begin{array}{cc} H(m,n) & n \neq m,\\ H(n,n)\rtimes \mathbb{Z}_2  & n=m.\\ \end{array}\right.
$$
\end{proposition}

\begin{proof} Let $g \in G(m,n)$. Assume case (i) of Lemma \ref{decomp} holds for some and hence all $v \in V$. Let $\hat v,\tilde{v} \in V$, then by Lemma \ref{decomp} there exists $\hat v_0,\tilde{v}_0 \in V$ and $b_{\hat{v}},b_{\tilde{v}} \in \mathrm{Isom}(W,W)$ such that for all $w \in W$ we have
$$
g(\hat{v}\otimes w)=\hat{v}_0\otimes b_{\hat{v}}(w), \quad \varphi(\tilde{v}\otimes w)=\tilde{v}_0\otimes b_{\tilde{v}}(w). 
$$
On the other hand for some $\tilde{w} \in W$ there must exist $\tilde{w}_0 \in W$ and $a_{\tilde{w}} \in \mathrm{Isom}(V,V)$ such that for all $v \in V$ 
$$
g(v\otimes \tilde{w})=a_{\tilde{w}}(v)\otimes \tilde{w}_0. 
$$
We thus get
$$
g(\hat{v}\otimes \tilde{w})=a_{\tilde{w}}(\hat{v})\otimes \tilde{w}_0=\hat{v}_0\otimes b_{\hat{v}}(\tilde{w})
$$
and
$$
g(\tilde{v}\otimes \tilde{w})=a_{\tilde{w}}(\tilde{v})\otimes \tilde{w}_0=\tilde{v}_0\otimes b_{\tilde{v}}(\tilde{w})
$$
Since this holds for any $\tilde{w} \in W$, the map $b_{\tilde{v}}$ must be a (nonzero) constant multiple of the map $b_{\hat{v}}$. It follows that there exists $a \in \mathrm{Isom}(V,V)$ and $b \in \mathrm{Isom}(W,W)$ such that $g(v\otimes w)=a(v)\otimes b(w)$ for all $v\in V$ and $w \in W$. If the case (ii) of Lemma \ref{decomp} holds, we can conclude similarly that there exists $a \in \mathrm{Isom}(W,V)$ and $b \in \mathrm{Isom}(V,W)$ and  such that $g(v\otimes w)=a(w)\otimes b(v)$ for all $v \in V$ and $w \in W$. From this the claim follows easily. 
\end{proof}

\subsection{Segre and almost Grassmann structures}
Finally, we show that for $n+m$ odd, an $(m,n)$-Segre structure $\pi : \B \to M$ is the same as an almost Grassmann structure. Let $S(m,n)$ denote the subgroup of $\mathrm{GL}(m,\R)\times \mathrm{GL}(n,\R)$ consisting of pairs $(a_m,a_n)$ satisfying $\det a_m\det a_n=1$. Clearly for $n+m$ odd, the map
$$
\rho : S(m,n) \to \mathrm{GL}(m,\R)\otimes \mathrm{GL}(n,\R), \quad (a_m,a_n) \mapsto a_m\otimes a_n
$$
is a Lie group isomorphism. For $k=m,n$ let $\chi_k : S(m,n) \to \mathrm{Aut}(\R^k)$ be the representation defined by 
$$
\chi_k\left((a_m,a_n)\right)(v)=a_kv 
$$
for $v \in \R^k$. The reader will recall that any (real or complex) representation $\chi : H(m,n)\simeq S(m,n) \to \mathrm{Aut}(V)$ defines a vector bundle $(\B)_{\chi}=\B\times_{\chi} V$ over $M$. Let $E_k\to M$ denote the rank $k$ vector bundle obtained via the representation $\chi_k$. By construction, the vector bundle associated to the representation $\chi_m\otimes \chi_n$ is the tangent bundle of $M$ and we thus obtain an isomorphism 
$$
TM \simeq E_m\otimes E_n
$$
inducing the Segre structure $\pi : \B \to M$. 

\subsection*{Acknowledgements}
The author is grateful to Robert L. Bryant for helpful discussions. The author also would like to thank the referees for pointing out a mistake in an earlier version of this article and for other valuable suggestions.  

\providecommand{\bysame}{\leavevmode\hbox to3em{\hrulefill}\thinspace}
\providecommand{\noopsort}[1]{}
\providecommand{\mr}[1]{\href{http://www.ams.org/mathscinet-getitem?mr=#1}{MR~#1}}
\providecommand{\zbl}[1]{\href{http://www.zentralblatt-math.org/zmath/en/search/?q=an:#1}{Zbl~#1}}
\providecommand{\jfm}[1]{\href{http://www.emis.de/cgi-bin/JFM-item?#1}{JFM~#1}}
\providecommand{\arxiv}[1]{\href{http://www.arxiv.org/abs/#1}{arXiv~#1}}
\providecommand{\doi}[1]{\href{http://dx.doi.org/#1}{DOI}}
\providecommand{\MR}{\relax\ifhmode\unskip\space\fi MR }
\providecommand{\MRhref}[2]{%
  \href{http://www.ams.org/mathscinet-getitem?mr=#1}{#2}
}
\providecommand{\href}[2]{#2}

\end{document}